\numberwithin{equation}{section}
\theoremstyle{plain}
\newtheorem{theorem}{Theorem}[section]
\newtheorem{corollary}[theorem]{Corollary}
\newtheorem{lemma}[theorem]{Lemma}
\newtheorem{proposition}[theorem]{Proposition}
\theoremstyle{definition}
\newtheorem{definition}[theorem]{Definition}
\newtheorem{remark}[theorem]{Remark}
\newtheorem{example}[theorem]{Example}
\newcommand{\Ker}{\operatorname{Ker}}
\newcommand{\cC}{\mathcal{C}}
\newcommand{\cD}{\mathcal{D}}
\newcommand{\cF}{\mathcal{F}}
\newcommand{\cH}{\mathcal{H}}
\newcommand{\cM}{\mathcal{M}}
\newcommand{\cMbar}{\overline{\mathcal{M}}}
\newcommand{\cN}{\mathcal{N}}
\newcommand{\C}{{\mathbb{C}}}
\newcommand{\Q}{{\mathbb{Q}}}
\newcommand{\R}{{\mathbb{R}}}
\newcommand{\Z}{{\mathbb{Z}}}
\newcommand{\p}{{\partial}}
\newcommand{\one}
{{{\mathchoice \mathrm{ 1\mskip-4mu l} \mathrm{ 1\mskip-4mu l}
\mathrm{ 1\mskip-4.5mu l} \mathrm{ 1\mskip-5mu l}}}}
\newcommand{\id}{\operatorname{id}}         
\newcommand{\Aut}{\operatorname{Aut}}          
\newcommand{\eps}{{\varepsilon}}
\newcommand{\del}{\partial}
\begin{document}

\address{Fachbereich Mathematik, Universit\"at Hamburg, Bundestrasse 55, 20146 Hamburg, Germany}

\title{Fukaya's work on Lagrangian embeddings}

\author{Janko Latschev}

\maketitle

\begin{abstract}
In this chapter I discuss some applications of string topology to the
study of Lagrangian embeddings into symplectic manifolds, as discovered by
Fukaya~\cite{Fu:06}.\\
\end{abstract}





\section{Introduction}

A submanifold $L \subset (X,\omega)$ in some symplectic
manifold $(X,\omega)$ is called Lagrangian if $\dim L = \frac 1 2 \dim
X$ and $\omega|_L=0$. A simple example is given by the zero section $L
\subset T^*L$ in the cotangent bundle of a smooth manifold $L$, and this is 
universal in the sense that a neighborhood 
of any Lagrangian embedding of a closed $L$ into some symplectic manifold is
symplectomorphic to a neighborhood of $L \subset T^*L$. Lagrangian
submanifolds play a fundamental role in symplectic geometry and topology,
as many constructions and objects can be recast in this form. In fact,
already in a 1980 lecture (cf.~\cite{We:81}), A. Weinstein formulated the
``symplectic creed'': 
\newline
\newline
\centerline{\em EVERYTHING IS A LAGRANGIAN
  SUBMANIFOLD.}\newline
\newline
Today, Lagrangian submanifolds (sometimes decorated with additional
structures) are for example studied as objects of the {\em Fukaya
  category}, which plays a fundamental role in Kontsevich's
formulation of homological mirror symmetry. 
Rather than delving into such general theories, I want to concentrate
here on a quite simple, and in fact basic, question:

\medskip

{\em Which closed, oriented  $n$-manifolds admit a
  Lagrangian embedding into the standard symplectic space $(\C^n,
  \omega_0)$, with $\omega_0 = \sum_j dx_j \wedge dy_j$?}

\medskip

An excellent introduction to this question, containing a discussion of some
of the relevant classical algebraic topology, as well as early results
obtained by holomorphic curve methods, is \cite{ALP:94}, which I will
quote freely.

For $n=1$ there is not much to say, since $S^1$ is the only connected
closed 1-manifold, and the Lagrangian condition $\omega_0|_L=0$ is
trivial in this case. In general, a necessary condition for an
oriented closed manifold $L^n$ to admit a Lagrangian embedding into
$\C^n$ is that its Euler characteristic $\chi(L)$ should vanish. This
is because the self-intersection number of any submanifold of $\C^n$
is clearly zero, but it is also equal to the Euler characteristic of
the normal bundle, which for Lagrangian submanifolds is isomorphic to
the cotangent bundle.  

So for $n=2$, the only orientable closed manifold that could have a
Lagrangian embedding into $\C^2$ is $T^2=S^1 \times S^1$, and it
embeds e.g. as the product of one circle in each $\C$-factor. For
non-orientable closed surfaces $\Sigma$, classical algebraic topology
implies that a necessary condition for the existence of a Lagrangian
embedding is that $\chi(\Sigma)$ is divisible by 4, and a beautiful
construction by Givental~\cite{Gi} shows that for strictly negative
Euler characteristic this is also sufficient. The embedding question
was only recently completely answered, when Shevchishin showed that
the Klein bottle does not have a Lagrangian embedding into $\C^2$ (\cite{Sh:09}, see also \cite{Ne:09} for an alternative argument by Nemirovski).

Already for $n=3$, elementary algebraic topology does not tell us
much. It was one of the many important results in Gromov's landmark
paper~\cite{Gro:85} to show that there are no exact Lagrangian
embeddings into $(\C^n,\omega_0)$, in the sense that any global
primitive $\lambda$ of the symplectic form $\omega_0$ has to restrict
to a non-exact closed 1-form on the Lagrangian submanifold $L \subset
\C^n$. This in particular rules out $S^3$, but of course there are
plenty of closed orientable 3-manifolds with $H^1(M,\R) \neq 0$.

All of this and more is discussed in \cite{ALP:94}. 
The goal of this chapter is to show how knowledge about string
topology can be applied to give a far-reaching refinement of
Gromov's result. In particular, I aim to present the overall
strategy for proving the following result:

\begin{theorem}(Fukaya)\label{thm:fukayamain}
Let $L$ be a compact, orientable, aspherical spin manifold of
dimension $n$ which admits an embedding as a Lagrangian submanifold of
$\C^n$. Then a finite covering space $\tilde L$ of $L$ is homotopy
equivalent to a product $S^1 \times L'$ for some closed $n-1$-manifold $L'$. 

Moreover, $\pi_1(\tilde L) \cong \pi_1(S^1 \times L') \subset
\pi_1(L)$ is the centralizer of some element $\gamma\in \pi_1(L)$
which has Maslov class equal to 2 and positive symplectic area. 
\end{theorem}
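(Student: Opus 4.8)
The plan is to combine three ingredients: the Floer-theoretic consequences of the fact that a compact Lagrangian $L\subset\C^n$ is displaceable, Fukaya's reinterpretation of the resulting obstruction class as a homology class in the free loop space via string topology, and the structure theory of Poincar\'e duality groups, which converts that homological information into the asserted splitting of a finite cover. I would begin with the standard dichotomy from Fukaya--Oh--Ohta--Ono theory, which applies because $L$ is spin: either $L$ is unobstructed, in which case it carries a bounding cochain and an Oh-type spectral sequence with $E_2=H^*(L;\Lambda)$ converging to $HF^*(L,L)$; or $L$ is obstructed. Since a compact $L\subset\C^n$ is displaceable by a translation, $HF^*(L,L)=0$; but in the unobstructed case the unit $1\in H^0(L)$ is a permanent cocycle (nothing maps into bidegree zero because $H^{<0}(L)=0$), so it would survive to $E_\infty$ and force $HF^*(L,L)\neq 0$. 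Hence $L$ is obstructed: the curvature $\kappa=\sum_\beta\kappa_\beta\,T^{\omega(\beta)}$ of its filtered $A_\infty$-structure cannot be cancelled by any bounding cochain, so it is genuinely non-zero over the Novikov ring.

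Next I would use Fukaya's upgrade of this obstruction to a class living on the free loop space $\cL L$. A holomorphic disk with boundary on $L$ and one marked boundary point carries not only its evaluation point in $L$ but the homotopy class of its based boundary loop, so the relevant moduli spaces map to $\cL L$; an index computation (using that $\C^n$ supports no holomorphic spheres, so no unexpected degenerations appear and orientability forces the Maslov numbers to be even) shows that the leading-order, dimension-$n$ piece of $\kappa$ records precisely counts of Maslov index $2$ disks. The spin structure is used to orient all the Kuranishi/virtual moduli spaces coherently, and Gromov's non-exactness guarantees that the area filtration of $\kappa$ is supported in strictly positive degree. Let $\beta_0$ be a class of smallest symplectic area contributing nontrivially to $\kappa$, and let $\gamma_0=[\partial\beta_0]\in\pi_1(L)$ be the conjugacy class of its boundary loop (well defined since $\pi_2(L)=0$). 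Minimality of the area rules out bubbling in $\beta_0$, so the leading term $\kappa_0$ is an honest non-zero class in $H_n(\cL_{\gamma_0}L)$, where $\cL_{\gamma_0}L$ is the component of $\cL L$ of loops freely homotopic to $\gamma_0$; moreover $\gamma_0$ is non-trivial, since a null-homotopic boundary loop would bound in $L$ and force $\omega(\beta_0)=0$, and $\gamma_0$ has Maslov class $2$ and positive area by construction.

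Now I would invoke string topology of aspherical manifolds. Because $L=K(\pi_1(L),1)$, the free loop space is again aspherical, its components are indexed by conjugacy classes, and $\cL_{\gamma_0}L\simeq K(Z(\gamma_0),1)$, where $Z(\gamma_0)\subset\pi_1(L)$ is the centralizer of $\gamma_0$; in fact $\cL_{\gamma_0}L$ is exactly the covering space of $L$ corresponding to $Z(\gamma_0)$. Thus $H_n(\cL_{\gamma_0}L)\neq 0$ says that this covering space is a closed (not merely open) $n$-manifold, i.e. $Z(\gamma_0)$ has finite index in $\pi_1(L)$ --- equivalently, $\pi_1(L)$ is an orientable Poincar\'e duality group of dimension $n$, and a subgroup of infinite index would have cohomological dimension at most $n-1$, contradicting $H_n(Z(\gamma_0))\neq 0$. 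Let $\widetilde L\to L$ be the corresponding finite cover, so $\pi_1(\widetilde L)\cong Z(\gamma_0)$ is a $\mathrm{PD}(n)$ group containing the central subgroup $\langle\gamma_0\rangle$, which is infinite cyclic because $\pi_1(L)$ is torsion-free and $\gamma_0\neq 1$. A $\mathrm{PD}(n)$ group with infinite cyclic centre virtually splits off that $\Z$-factor: after replacing $\widetilde L$ by a further finite cover (still finite over $L$) the central extension $1\to\Z\to\pi_1(\widetilde L)\to Q\to1$ becomes a direct product with $Q$ a $\mathrm{PD}(n-1)$ group, so $\pi_1(\widetilde L)\cong\Z\times Q$. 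Since $\widetilde L$ and $S^1\times K(Q,1)$ are aspherical with isomorphic fundamental group they are homotopy equivalent, and $K(Q,1)$ is realized by a closed $(n-1)$-manifold $L'$ --- for instance the fibre of a circle-valued map on $\widetilde L$ Poincar\'e dual to $\gamma_0$. Taking $\gamma=\gamma_0$ gives both conclusions of the theorem.

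The main obstacle is the middle step: making rigorous the passage from the $A_\infty$-curvature to a well-defined homology class on $\cL L$ with the stated degree and leading-order behaviour. This requires the full virtual-perturbation apparatus for moduli of bordered holomorphic curves, a coherent system of orientations derived from the spin structure, and --- most delicately --- control of the boundary degenerations of these moduli spaces and their compatibility with the loop-product/string-bracket operations on $\cL L$, which is exactly what guarantees that the minimal-area term survives as a non-zero class rather than being cancelled. By comparison the homotopy-theoretic input (string topology of $K(\pi,1)$'s) is formal, and the concluding geometric group theory --- producing the finite cover on which the central $\Z$ becomes a direct factor --- is classical, resting on the structure of Poincar\'e duality groups with non-trivial centre.
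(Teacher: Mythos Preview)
Your overall architecture---produce a nonzero degree-$n$ class on a component of the free loop space, identify that component with a covering space via asphericity, and read off compactness of the cover from $H_n\neq 0$---matches the paper's. The genuine gap is in how you manufacture that class. From obstructedness you extract $\kappa=m_0\neq 0$ and take the minimal-area contribution $\kappa_{\beta_0}$; but its geometric degree on $\cL_{\gamma_0}L$ is $n-2+\mu(\beta_0)$, and nothing in your argument forces $\mu(\beta_0)=2$. A priori $\mu(\beta_0)$ is any even integer in $[2-n,2]$, and for $\mu(\beta_0)<2$ you only learn $H_{<n}(\tilde L)\neq 0$, which says nothing about compactness. (Your unit-survival step is also only half justified: ``nothing maps into bidegree zero'' shows the unit is not a boundary, but you also need it to be $d_r$-closed, which comes from homotopy unitality of the deformed $A_\infty$-structure rather than from degree reasons.)

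The paper closes this gap with a second, independent input that your sketch omits. Besides the Maurer--Cartan element $\alpha$ built from the moduli spaces $\overline{\cM}(a)$, there is an element $\beta$ built from a family $\overline{\cN}(a)$ of \emph{perturbed} disks---this is exactly Gromov's cobordism---satisfying $\hat\ell^{\alpha}(\bar\beta)=\overline{[L]}$. After transferring the L$_\infty$-structure to homology and projecting to the trivial free-homotopy component, one obtains an identity
\[
\sum_{k\ge 2}\frac{1}{(k-1)!}\sum_{a=a_1+\cdots+a_{k-1}}\ell_k^{\cH}\bigl(\bar\beta'(-a),\bar\alpha'(a_1),\dots,\bar\alpha'(a_{k-1})\bigr)=\overline{[L]}.
\]
Since the right-hand side lives in degree $n$, asphericity (which bounds all classes to degrees $[0,n]$) forces $2\le\mu(a)\le n+1$ and $2-n\le\mu(a_i)\le 2$. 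If every $\mu(a_i)\le 0$ then $\mu(a)=\sum_i\mu(a_i)\le 0$, contradicting $\mu(a)\ge 2$; hence some $\alpha'(a_i)$ with $\mu(a_i)=2$ is nonzero in $H_n(\Lambda_\gamma L)$. It is precisely the relation $a=\sum a_i$ linking the $\beta$-side constraint to the $\alpha$-side classes that is unavailable from $\kappa$ alone.

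Your endgame is also more complicated than needed and weakens the conclusion. You appeal to Poincar\'e-duality-group theory to split off the central $\Z$ only virtually, after a further finite cover; but then $\pi_1(\tilde L)$ is a proper subgroup of the centralizer, not the centralizer itself as asserted. The paper instead observes that the Maslov homomorphism $\tfrac12\mu:Z(\gamma)\to\Z$ is surjective (because $\mu(\gamma)=2$) and split by $1\mapsto\gamma$, so $Z(\gamma)\cong\Z\times\ker(\mu|_{Z(\gamma)})$ on the nose---no further cover, and no PD-group machinery, is required.
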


The assertion about the Maslov class is known as {\em Audin's conjecture}, and was originally asked for tori in $\C^n$, see \cite{Au:88}. 
The spin condition is a technical assumption (it is needed to make the
relevant moduli spaces of holomorphic disks orientable), and I expect
that it can be removed by reformulating the argument somewhat. The
asphericity assumption (meaning that all higher homotopy groups of $L$
vanish) enters the proof in a fairly transparent way, and one
can imagine various replacements. 

As a corollary, we obtain the following more precise statement in dimension 3.

\begin{corollary}(Fukaya)\label{cor:fukaya1}
If the closed, orientable, prime 3-manifold $L$ admits a
Lagrangian embedding into $\C^3$, then $L$ is diffeomorphic to
a product $S^1 \times \Sigma$ of the circle with a closed, orientable
surface. 
\end{corollary}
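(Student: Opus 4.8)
The plan is to combine Theorem~\ref{thm:fukayamain} with classical $3$-manifold topology. \emph{Step 1 (reductions).} Every orientable $3$-manifold is parallelizable, hence spin, so the spin hypothesis is automatic. A closed orientable prime $3$-manifold is either $S^1\times S^2$ --- which already has the asserted form, with $\Sigma=S^2$ --- or irreducible. An irreducible closed orientable $3$-manifold with finite fundamental group is a spherical space form and satisfies $H^1(L;\R)=0$, contradicting Gromov's theorem (recalled in the introduction) that no Lagrangian submanifold of $\C^n$ is exact. Hence $L$ is irreducible with $\pi_1(L)$ infinite, so its universal cover is an open simply connected irreducible $3$-manifold, i.e.\ $\R^3$, and $L$ is aspherical. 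Theorem~\ref{thm:fukayamain} then provides a finite cover $\widetilde L$ homotopy equivalent to $S^1\times L'$ for a closed orientable surface $L'$, and an element $\gamma\in\pi_1(L)$ of Maslov class $2$ and positive symplectic area with $C_{\pi_1(L)}(\gamma)=\pi_1(\widetilde L)$ inside $\pi_1(L)$.

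\emph{Step 2 (the Maslov class fibers $L$).} Recall that the Maslov class $\mu_L\in H^1(L;\Z)$ of an oriented Lagrangian in $\C^n$ is divisible by $2$: one has $\mu_L=2\nu_L$, where $\nu_L$ is the pullback of the generator of $H^1(U(n)/SO(n);\Z)\cong\Z$ --- the homotopy class of $\det\colon U(n)/SO(n)\to S^1$ --- along the oriented Gauss map of $L$. From $\langle\mu_L,\gamma\rangle=2$ we get $\nu_L(\gamma)=1$, so $\nu_L\colon\pi_1(L)\to\Z$ is onto; put $N:=\ker\nu_L$. Since $C_{\pi_1(L)}(\gamma)$ consists of elements commuting with $\gamma$, we have $\gamma\in Z(\pi_1(\widetilde L))$, and $\pi_1(\widetilde L)\cong\Z\times\pi_1(L')$. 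If $L'$ has genus $\ge2$, then $Z(\pi_1(\widetilde L))$ is the $\Z$-factor, which $\gamma$ generates because $\nu_L(\gamma)=1$; consequently $N\cap\pi_1(\widetilde L)$ maps isomorphically onto $\pi_1(L')$ and so is finitely generated. If $L'=T^2$ then $\pi_1(\widetilde L)\cong\Z^3$ and all of its subgroups are finitely generated; and $L'=S^2$ is impossible since $\widetilde L$ is aspherical. In all cases $N\cap\pi_1(\widetilde L)$ is finitely generated; it has finite index in $N$ because $\pi_1(\widetilde L)$ has finite index in $\pi_1(L)$, so $N$ is finitely generated. As $L$ is irreducible, Stallings' fibration theorem yields a locally trivial bundle $\Sigma\hookrightarrow L\to S^1$ with $\Sigma$ a closed orientable surface, $\pi_1(\Sigma)=N$, and monodromy an orientation-preserving diffeomorphism $\phi$ of $\Sigma$; equivalently $\pi_1(L)\cong\pi_1(\Sigma)\rtimes_{\phi_*}\Z$, the $\Z$ generated by a $t$ with $\nu_L(t)=1$. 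Since $N$ is infinite, $\Sigma$ is $T^2$ or has genus $\ge2$.

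\emph{Step 3 (the monodromy is trivial --- the crux).} Write $\gamma=st$ with $s\in\pi_1(\Sigma)$. A direct computation shows that $C_{\pi_1(\Sigma)}(\gamma)=\{n\in\pi_1(\Sigma):\phi_*(n)=s^{-1}ns\}$ is the fixed subgroup of the automorphism $\alpha$ of $\pi_1(\Sigma)$ obtained by composing $\phi_*$ with conjugation by $s$; and it has finite index in $\pi_1(\Sigma)$ because $C_{\pi_1(L)}(\gamma)$ has finite index in $\pi_1(L)$. Suppose $\Sigma$ has genus $\ge2$. The normal core $N_0$ of $\mathrm{Fix}(\alpha)$ in $\pi_1(\Sigma)$ has finite index and is fixed pointwise by $\alpha$, so for each $g\in\pi_1(\Sigma)$ the element $g^{-1}\alpha(g)$ centralizes $N_0$; but a finite-index subgroup of a genus-$\ge2$ surface group has trivial centralizer (a nontrivial element centralizing it would force that finite-index subgroup, itself a genus-$\ge2$ surface group, to be cyclic), so $\alpha=\mathrm{id}$. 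Thus $\phi_*$ differs from the identity by an inner automorphism, its outer class is trivial, and by Dehn--Nielsen--Baer $\phi$ is isotopic to the identity; the bundle is trivial and $L\cong S^1\times\Sigma$. When $\Sigma=T^2$ the same computation gives $\ker(\phi_*-\mathrm{id})$ of finite index in $\Z^2$, whence $\phi_*=\mathrm{id}$ and $L\cong T^3$.

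\emph{Main obstacle.} The real content lies in Step 3: passing from ``a finite cover of $L$ is a product'' to ``$L$ itself is a product.'' This is precisely what the refined output of Theorem~\ref{thm:fukayamain} is designed to give --- the centralizer description produces a surjection $\pi_1(L)\to\Z$ in which $\gamma$ has controlled image, and the evenness of the Maslov class of an oriented Lagrangian makes that image a \emph{generator}, so that one concludes $\phi_*$ (and not merely a power of it) is inner; without this one could only recover the product structure on a finite cover. The remaining ingredients --- the sphere theorem, Stallings' fibration theorem, Dehn--Nielsen--Baer, and facts about centralizers in surface groups --- are all standard.
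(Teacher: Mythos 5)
Your proposal is correct, and it follows the paper's skeleton up through Stallings' theorem: the same reductions (prime gives $S^1\times S^2$ or irreducible, Gromov rules out finite $\pi_1$, sphere theorem plus Hurewicz give asphericity), the same appeal to Theorem~\ref{thm:fukayamain}, and Stallings applied to the kernel of $\frac{1}{2}\mu$, with finite generation extracted from the finite-index centralizer exactly as in the paper. The endgame, however, is genuinely different. The paper proves a lemma (an automorphism of a closed surface group which is trivial on a finite-index subgroup is trivial, via unique roots in cyclic subgroups), applies it to conjugation by $\gamma$ on $K=\ker\mu$ to conclude that $\gamma$ is central, so the finite cover of Theorem~\ref{thm:fukayamain} is $L$ itself, and then invokes Waldhausen's theorem to upgrade the homotopy equivalence $L\simeq S^1\times\Sigma$ to a homeomorphism. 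You instead keep the actual fibration $\Sigma\hookrightarrow L\to S^1$ furnished by Stallings and show the monodromy is inner on $\pi_1(\Sigma)$; your normal-core/trivial-centralizer argument is an alternative proof of essentially the same surface-group rigidity used in the paper's lemma, and Baer/Dehn--Nielsen--Baer then makes the monodromy isotopic to the identity, so the bundle is trivial. What your route buys is the product structure (hence the diffeomorphism) directly from triviality of the bundle, with no appeal to Waldhausen; what it costs is the extra input that homotopic diffeomorphisms of surfaces are isotopic. One small inaccuracy worth fixing: an open, simply connected, irreducible $3$-manifold need not be $\R^3$ (the Whitehead manifold), but you never need that identification --- asphericity of $L$ already follows from the sphere theorem, noncompactness of the universal cover and Hurewicz, as in the paper.
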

The fact that the product $S^1 \times \Sigma$ does embed as a
Lagrangian submanifold into $\C^3$ follows from an elementary
construction, see e.g.~\cite{ALP:94}. Basically, one starts from an
isotropic embedding of $\Sigma$ into $\C^3$, e.g. by embedding it into 
the Lagrangian subspace $\R^3 \subset \C^3$. Then one uses the fact 
that a small neighborhood necessarily is symplectomorphic to a
neighborhood of the zero section in $T^*\Sigma \oplus \underline{\C}$,
the direct sum of the cotangent bundle with a trivial symplectic vector
bundle of rank 2, to embed the product $S^1 \times \Sigma$ by taking
the product of the zero section in $T^*\Sigma$ with a standard small
$S^1 \subset \C$. 

The above statements are special cases of a more general result
discovered by Kenji Fukaya, and first described in \cite{Fu:06}, see
also \cite{Fu:07}.  
As with most results involving $J$-holomorphic curves, the underlying
idea can be traced back to Misha Gromov's foundational paper
\cite{Gro:85}. His proof of the fact that there are no exact 
compact Lagrangian submanifolds of $\C^n$ contains an important seed
for Fukaya's arguments. Therefore, after discussing some aspects of
moduli spaces of holomorphic disks in the next section, I begin
section~\ref{sec:main} by sketching Gromov's argument, followed by the
discussion of an instructive example and the statement of Fukaya's
refinement (Theorem~\ref{thm:blackbox}). In a nutshell, Fukaya's
important observation was that the compactification of the moduli
spaces of holomorphic disks with boundary on a Lagrangian submanifold
$L\subset \C^n$ can be expressed in terms of string topology
operations, in particular the loop bracket (and possibly also its
higher analogues at the chain level). 

After an interlude section on basic properties of L$_\infty$ algebras, 
I describe how Theorem~\ref{thm:fukayamain} is a fairly
straightforward consequence of Theorem~\ref{thm:blackbox}.
Corollary~\ref{cor:fukaya1} will follow by using specific
facts from 3-dimensional topology. Before finishing with a guide
to  the literature, I discuss a few further small observations.

At the moment of this writing, full proofs of the above theorems are
not available yet. One would like to construct a chain model
$C_*(\Lambda L)$ for the free loop space of the manifold $L$ with the
loop bracket operation $\lambda_2$ (and higher operations as
necessary), such that $(C_*(\Lambda L),\lambda_1=\del,
\lambda_2,\dots)$ is an L$_\infty$-algebra. Moreover when this model 
is considered with coefficients in a suitable Novikov ring, the
compactified moduli spaces of holomorphic disks should give rise to an
element in this chain complex satisfying the Maurer-Cartan equation in
this L$_\infty$-algebra. So the principal problem is that the apparent
freedom one has in building the chain model is severely constrained by
the need to make it fit with the analysis of holomorphic disks, which
in particular involves delicate transversality and gluing issues. 

In this text, I will largely ignore the technical difficulties,
by stating the key results as black boxes. 

\section{Moduli spaces of holomorphic disks}
\label{sec:disks}

\subsection*{Basic notions}

The main tool in our study of Lagrangian embeddings $L \subset \C^n$
will be moduli spaces of holomorphic disks with boundary on
$L$. Before introducing the relevant spaces, I will briefly review
some standard notions.

The first of these is the {\em Maslov index} of a loop of Lagrangian
subspaces in $\C^n$. The Lagrangian Grassmannian
$\operatorname{GLag}_n$ is defined as the space of all Lagrangian
subspaces of $(\C^n, \omega_0)$. Standard symplectic linear algebra
shows that a real $n$-dimensional subspace $V \subset \C^n$ is Lagrangian
if and only if it is orthogonal with respect to the standard Euclidean
inner product to $iV$, its rotation by $i\in \C$. Moreover, any orthonormal
basis of a Lagrangian subspace is a unitary basis of $\C^n$, and
conversely the real linear span of a unitary frame is a Lagrangian
subspace. Since $O(n)$ acts transitively on the unitary frames generating the
same Lagrangian subspace, one concludes that $\operatorname{GLag}_n$
can be identified with $U(n)/O(n)$. 

The map $\det^2:U(n) \to S^1$ which associates to a unitary
matrix the square of its determinant induces a well-defined map 
$\rho:\operatorname{GLag}_n \to S^1$, and for any loop $\gamma:S^1 \to
\operatorname{GLag}_n$ we define {\em its Maslov index} as
$$
\mu(\gamma) := \deg(\rho \circ \gamma).
$$
Clearly $\mu(\gamma)$ so defined is an invariant of the free homotopy
class of $\gamma$. Moreover, one can check that $\mu$ induces an
isomorphism  $\pi_1(\operatorname{GLag}_n) \cong \Z$.

Now given a Lagrangian immersion $L \hookrightarrow \C^n$, any loop
$\gamma:S^1 \to L$ determines a loop in $\operatorname{GLag}_n$, simply
by taking the loop of tangent spaces of $L$ at the image points of
$\gamma$. In this way, we get a Maslov index
$$
\mu:\pi_1(L) \to \Z.
$$
Discussions of the Maslov index can be found in various texts,
see e.g. \cite[p.~116ff]{AG:85}, \cite[p.~33ff]{AL:94} or
\cite[p.~50ff]{MS:98}. In particular, it is easy to see that for a
Lagrangian immersion of an oriented manifold $L$, the Maslov index
takes values in $2\Z$. 

\bigskip

Another useful notion is the {\em area} or {\em energy} of a disk $u:(D,\p D)
\to (\C^n,L)$ with boundary on a Lagrangian submanifold $L \subset
(\C^n,\omega_0)$. It is defined as
$$
E(u) := \int_D u^*\omega,
$$
and one easily checks that it only depends on the free relative homotopy class
of $u$. Indeed, just observe that given a homotopy $h:([0,1] \times
D, [0,1] \times \p D) \to (\C^n,L)$ with $h_0=u$ and $h_1=u'$, we have
$$
\int_D u'^*\omega - \int_D u^*\omega = \int_{[0,1] \times \p D} h^*\omega
= 0
$$
by the assumption that $L$ is a Lagrangian submanifold. In particular,
$E$ descends to a homomorphism $E: \pi_2(\C^n, L) \to \R$.

\bigskip

Finally, in order to discuss holomorphic curves, we introduce the relevant 
spaces of almost complex structures. Generally, an {\em almost complex structure} $J$ on some manifold $M$ is an automorphism of the tangent bundle $J:TM \to TM$ with $J^2=-\id$. If $M$ carries a symplectic form $\omega$, then an almost complex structure $J$ on $M$ is said to be {\em tamed by} $\omega$, if $\omega(v,Jv)>0$ for all nonzero tangent vectors $v$. In other words, $\omega$ is a positive area form on each 1-dimensional $J$-complex subspace of each tangent space. It is a standard result that these $\omega$-tamed almost complex structures form a contractible (in particular non-empty) space. Given such a tamed $J$, one can define a Riemannian metric on $M$ by setting
$$
g_J(v,w):= \frac 12\left(\omega(v,Jw) + \omega(w,Jv)\right).
$$
One can also define the notion of a $J$-holomorphic map $u:(\Sigma,j) \to (M,J)$ from some Riemann surface $(\Sigma,j)$ to $M$, simply by asking that it satisfy the usual Cauchy-Riemann equations with respect to the given $J$:
\begin{equation}
\bar\p_Ju:= \frac 12(du+J\circ du\circ j)=0.
\end{equation}
In local conformal coordinates $z=s+it$ on the Riemann surface this can be written equivalently as
$$
\p_su + J \p_tu = 0.
$$
In Riemannian geometry, the $L^2$-energy of a map $u:\Sigma \to (M,g_J)$ is defined as 
$$ 
E(u) = \frac 12 \int_\Sigma \|du\|^2 d\mu,
$$
where $\mu$ is any volume form on $\Sigma$ and $\|du\|$ denotes the operator norm of $du:T_x \Sigma \to T_{u(x)}M$ with respect to the metric $\mu(\,.\,,j\,.\,)$ on $\Sigma$ and the metric $g_J$ on $M$. The integrand turns out to be independent of the choice of $\mu$, as any scaling factor also appears in the operator norm with opposite exponent. 

Now the importance of the taming condition stems from the following crucial fact: Suppose $u:\Sigma \to M$ is $J$-holomorphic, and we have chosen local conformal coordinates $(s,t)$ on $\Sigma$.  Then at $x$ we have
$$
\|du\|^2 = \omega(\p_su,J\p_su)+ \omega(\p_tu,J\p_tu)= 2 \omega(\p_su,\p_tu),
$$
so that 
$$
\frac 12\|du\|^2ds \wedge dt = u^*\omega.
$$
In particular, for $J$-holomorphic disks $u:(D,\p D) \to (\C^n,L)$, the energy as defined above, which was a purely topological quantity, is the same as the usual $L^2$-energy of the map $u$.

\subsection*{Moduli spaces}


Choose an almost complex structure $J$ tamed by the standard
symplectic structure $\omega_0$ on $\C^n$. Given a relative homotopy
class $a\in \pi_2(\C^n,L)$, we consider the set
$$
\widetilde{\cM}(a,J):= \{ u:(D,\p D) \to (\C^n, L) : \bar\p_J u =
0, [u] = a \in \pi_2(\C^n,L)\}.
$$
The real 2-dimensional group $\Aut(D,1) \subset PSL(2,\R)$ of biholomorphisms of the disk fixing $1\in S^1$ acts on $\widetilde{\cM}(a,J)$ by
precomposition, and the quotient 
$$
\cM(a,J):= \widetilde{\cM}(a,J)/\Aut(D,1)
$$ 
is called the moduli space of holomorphic disks in the class $a$. 
\begin{remark}
The reader should take note that in the literature moduli spaces of
holomorphic curves are almost universally denoted by $\cM$, usually
with some decoration, and the precise meaning of the symbol should
very carefully be checked in each case.
\end{remark}
The equation defining $\widetilde{\cM}(a,J)$ is elliptic, and so the
linearization is a Fredholm operator. The index theorem for
holomorphic curves, as discussed for example in \cite[Appendix
C]{MS:04}, implies  that the index of this operator, and hence the
expected dimension of the moduli space $\widetilde{\cM}(a,J)$, is $n+
\mu(a)$. The assignment $u \mapsto \bar\p_Ju$ can be viewed as a section 
of a suitable Banach space bundle. Under favourable circumstances 
this section can be arranged to be transverse to the zero section, in which 
case  its zero set $\widetilde{\cM}(A,J)$ is a manifold of the expected 
dimension. In 
general, this is a serious technical difficulty beyond the scope of the 
present discussion, and resolving it requires substantial work.
 
Note that, for $a\neq 0$, the action of $\Aut(D,1)$ on
$\widetilde{\cM}(a,J)$ is free. So, assuming that we can arrange
transversality, we conclude that $\cM(a,J)$ is a smooth manifold of
dimension 
\begin{equation}\label{eq:index}
\dim \cM(a,J) = n-2 + \mu(a).
\end{equation}
As shown by examples in \cite{FOOO}, one generally needs the spin
condition on $L$ to be able to orient the moduli spaces $\cM(a,J)$. 

For several reasons the individual spaces $\cM(a,J)$ tend to
not be very useful for proving anything interesting about $L$ (with the
notable exception of Theorem~\ref{thm:gromov-notexact}). The first
is that these spaces strongly depend on $J$, as simple examples show.
\begin{example}\label{ex:nosol}
Consider $L=S^1 \times S^1 \subset \C^2$ and the relative homotopy
class $a=(1,0) \in \pi_2(\C^2,L)$ which has degree 1 in the first
factor and degree 0 in the second. 

Given any real number $\alpha \geq 0$, consider the almost
complex structure $J_\alpha$ on $\C^2$ given by the matrix 
$$
J_\alpha = \left( 
\begin{array}{cc} J_0 & 0 \\
A & J_0 \end{array}\right), 
\quad \text{\rm with } 
A = \left( \begin{array}{cc} \alpha & 0 \\
0 & -\alpha \end{array}\right) 
\quad \text{\rm and }
J_0 = \left( \begin{array}{cc}  0 & -1 \\
1 & 0 \end{array}\right) .
$$
If $\alpha$ ranges in some interval $0 \leq \alpha \leq \alpha_0$,
then all the $J_\alpha$ are tamed by the symplectic form
$\omega = \alpha_0^2 \omega_0 \oplus \omega_0$ on $\C^2=\C \oplus \C$.

Now for the standard split complex structure $J_0$ on $\C^2$, the
moduli space $\cM(a,J_0)$ is clearly non-empty, since for any $z_1,z_2
\in S^1$ the map $u(z) = (z_1z,z_2)$ defines an element in it. 

In general, given any $J_\alpha$-holomorphic map $u$ in the class $a$,
note that the projection $u_1$ onto the first $\C$-factor is
holomorphic in the usual sense, and since it has degree 1 it will be a
biholomorphism. So, precomposing with a suitable element of
$\Aut(D,1)$, we may assume that this projection $u_1$ is just a rotation by 
$u_1(1)$.  A short computation shows that in this case for $u$ to be
$J_\alpha$-holomorphic, it is necessary and sufficient that
$$
\p_s u_2 + J_0 \p_t u_2 = \alpha \p_tu_1.
$$
Now if $u_2:(D,\p D) \to (\C,S^1)$ is a solution of this equation,
then 
\begin{align*}
\alpha = |\alpha \p_t u_1|
&= \frac 1 \pi \left| \int_D (\p_s u_2 + J_0\p_t u_2) dsdt \right|\\
&= \frac 1 \pi \left| \int_D d(u_2 dt - J_0 u_2ds) \right|\\
&= \frac 1 \pi \left| \int_0^{2\pi} (\cos(\theta)+ \sin(\theta) J_0)
  u_2(e^{i\theta}) d\theta \right|\\
&\leq \frac 1 \pi \int_0^{2\pi} |u_2(e^{i\theta})| d\theta = 2.
\end{align*}
So for $\alpha>2$ the moduli space $\cM(a,J_\alpha)$ is empty. 
\end{example}
The phenomenon discussed in this example is a crucial ingredient in the proof below of Gromov's Theorem~\ref{thm:gromov-notexact}.

A second, related complication is the failure of compactness, a simple
instance of which is described in Example~\ref{ex:noncompact} below.
Both of these apparent problems can be overcome by considering the
collection of all $\{\cM(a,J)\}_{a\in \pi_2(\C,L)}$ at once. In the
next subsection, I briefly discuss the compactness issue.

\subsection*{The compactness theorem for disks}

Gromov's compactness theorem, in its modern formulation, asserts that
every sequence of holomorphic curves of fixed topology and uniformly
bounded energy, whose images lie within a compact subset of the target, has a
subsequence converging in a suitable sense to a limiting ``stable
curve''. A proof in text book form for holomorphic spheres was given by McDuff and Salamon~\cite{MS:04}, and an exposition of Gromov's proof for the higher genus case was written up by Hummel~\cite{Hu:97}. The case of curves with boundary is for example discussed by Liu as part of her Ph.D. thesis~\cite{Liu:02}. For holomorphic disks with Lagrangian boundary conditions the precise statement and proof were worked out in U.~Frauenfelder's diploma thesis, published as \cite{Fr:08}.

Here, I will describe the statement under the simplifying assumption
that the target symplectic manifold $(X,\omega)$ is exact,
i.e. $\omega=d\lambda$, as in the case of my main example
$X=\C^n$. This assumption in particular implies that there are no
non-constant holomorphic spheres in $X$, and so the possible limiting
configurations are much more restricted than in the general
case. Basically, all the stable curves arising as Gromov limits will be
stable trees of disks, as explained presently.

Recall that a {\em tree} is a (finite) set $T$ together with an edge
relation $E \subset T \times T$ satisfying the following conditions:

\medskip

{\bf (symmetric)} If $\alpha E \beta$ then $\beta E \alpha$.

\medskip

{\bf (antireflexive)} If $\alpha E \beta$ then $\alpha \neq \beta$.

\medskip

{\bf (connected)} For all $\alpha,\beta \in T$ with $\alpha \neq
  \beta$ there exist $\gamma_0, \dots,\gamma_m \in T$ with $\gamma_0=
  \alpha$, $\gamma_m=\beta$ and such that $\gamma_i E \gamma_{i+1}$
  for all $0 \leq i \leq m-1$.

\medskip

{\bf (no cycles)} If $\gamma_0, \dots, \gamma_m \in T$ satisfy
  $\gamma_i E \gamma_{i+1}$ and $\gamma_i \neq \gamma_{i+2}$ for all
  $i$ then $\gamma_0 \neq \gamma_m$.

\medskip

One usually draws trees by drawing the corresponding 1-dimensional CW-complexes, which have one vertex for each $\alpha \in T$ and one (unoriented) 1-cell connecting $\alpha$ and $\beta$ whenever $\alpha E \beta$ (and $\beta E \alpha$).

It follows from these axioms that the CW-complex corresponding to a tree in this way is connected and contractible. Moreover, deleting an edge connecting two vertices $\alpha,\beta \in T$ splits the CW-complex into two connected components, and we denote the subset of vertices in the component of $\beta$ by $T_{\alpha\beta}$.
\begin{figure}[h]
 \labellist
  \small\hair 2pt
  \pinlabel $\alpha$ [br] at 32 45
  \pinlabel $\beta$ [l] at 86 39
  \pinlabel $T_{\alpha\beta}$ [tl] at 133 19
 \endlabellist
  \centering
  \includegraphics[scale=.73]{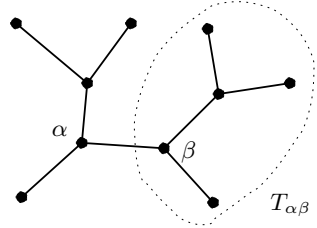}
 \caption{Two vertices $\alpha$ and $\beta$, and the corresponding subtree $T_{\alpha\beta}$.}
 \label{fig.tree}
\end{figure}

A map $f:(T,E) \to (T',E')$ is called a {\em tree homomorphism} if
for each $\alpha' \in T'$ the preimage $f^{-1}(\alpha')$ is a tree,
and moreover $\alpha E \beta$ implies that either $f(\alpha) E'
f(\beta)$ or $f(\alpha)=f(\beta)$. A bijective tree homomorphism 
is called {\em tree isomorphism}.
\begin{figure}[h]
 \labellist
  \small\hair 2pt
  \pinlabel $\alpha$ [l] at 480 311
  \pinlabel $\beta$ [l] at 560 284
  \pinlabel $\gamma$ [l] at 639 269
  \pinlabel $\delta$ [l] at 565 127
  \pinlabel $f^{-1}(\alpha)$ [l] at 76 347
  \pinlabel $f^{-1}(\beta)$ [l] at 297 270
  \pinlabel $f^{-1}(\gamma)$ [l] at 303 182
  \pinlabel $f^{-1}(\delta)$ [l] at 327 82
 \endlabellist
  \centering
  \includegraphics[scale=.53]{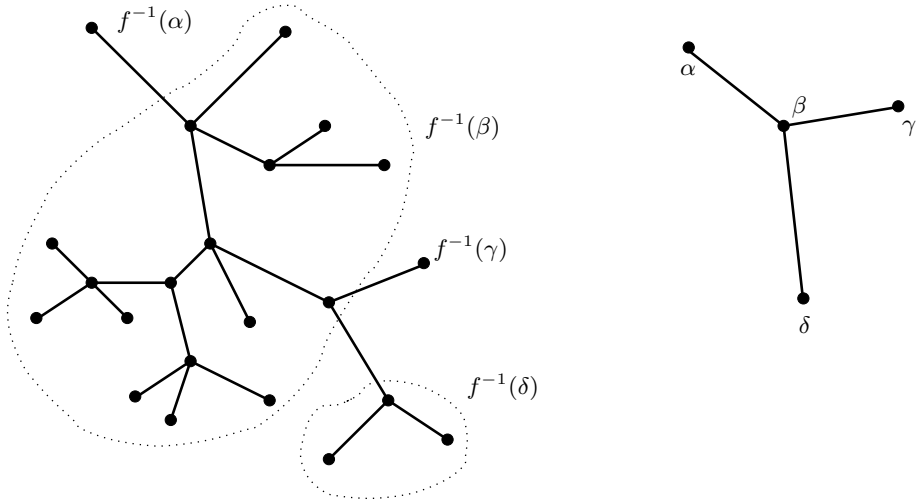}
 \caption{An example of a tree morphism.}
 \label{fig.treemorphism}
\end{figure}

Next we fix an exact symplectic manifold $(X,\omega)$ with a compact
Lagrangian submanifold $L \subset X$ and an almost complex structure
$J$ tamed by $\omega$. Given this data, we define a {\em stable tree
of holomorphic disks with one boundary marked point} to be a tuple
$$
(\mathbf u, \mathbf z) = (\{ u_\alpha\}_{\alpha \in T},
\{z_{\alpha\beta}\}_{\alpha E \beta}, \{\alpha_0,z_0\})
$$
modelled over a tree $(T,E)$, consisting of a collection of
holomorphic maps $u_\alpha:(D,\p D) \to (X,L)$ indexed by $\alpha \in
T$, a collection of nodal points $z_{\alpha\beta} \in S^1$ indexed by
directed edges $\alpha E \beta$, and a  marked point $z_0 \in \p D$
labelled by $\alpha_0 \in T$, subject to the following conditions:
\begin{enumerate}
\item If $\alpha E \beta$ then
  $u_\alpha(z_{\alpha\beta})=u_\beta(z_{\beta\alpha})$. 
\item For each $\alpha \in T$, the {\em special boundary points
  associated with the vertex $\alpha$}, namely the points
  $z_{\alpha\beta}$ for different $\beta \in T$ with $\alpha E \beta$,
  together with $z_0$ for $\alpha=\alpha_0$, are all pairwise distinct. 
\item If $u_\alpha$ is constant, then the cardinality of the set
  $Y_\alpha = \{ z_{\alpha\beta}: \alpha E \beta\} \cup \{z_0:
  \alpha_0=\alpha\}$ of all special points is at least 3.
\end{enumerate}
As usual, two such stable trees of disks $(\mathbf u,\mathbf z)$ and 
$(\mathbf u',\mathbf z')$ modelled on trees $T$ and $T'$,
respectively, are called {\em equivalent}
if there exist a tree isomorphism $f:T \to T'$ and a collection of
M\"obius transformations $\{\phi_\alpha\}_{\alpha \in T}$ such that
$$
z_{f(\alpha)f(\beta)} = \phi_\alpha(z_{\alpha\beta}), \quad
\alpha_0'=f(\alpha_0), \quad z_0'=\phi_{\alpha_0}(z_0), \text{ \rm and }\quad u'_{f(\alpha)}\phi_\alpha=u_\alpha
$$
for all $\alpha, \beta \in T$ with $\alpha E \beta$.

Define the {\em energy} of a stable tree of holomorphic disks as
the sum
$$
E(\mathbf u) := \sum_{\alpha \in T} E(u_\alpha).
$$

Now we can give the relevant notion of convergence.

\begin{definition}
Let $(X,\omega)$ be an exact symplectic manifold, let $L \subset X$ be a
compact Lagrangian submanifold, and let $J$ be an $\omega$-tame almost
complex structure on $X$. A sequence $u^\nu:(D,\p D) \to (X,L)$ of
holomorphic disks with one boundary marked point $z_0^\nu$ is said to {\em
Gromov converge} to a stable tree of disks $(\mathbf u, \mathbf z)$
with one boundary marked point if there exist sequences of elements
$\phi_\alpha^\nu \in PSL(2,\R)$ indexed by $\alpha \in T$ such that
the following statements hold:
\begin{enumerate}
\item The sequence $\phi^\nu_{\alpha_0}(z^\nu_0)$   converges to $z_0$.
\item For every $\alpha \in T$, the sequence of maps $u^\nu \circ
  \phi_\alpha^\nu$ converges to $u_\alpha$ uniformly on compact
  subsets of $D \setminus \{ z_{\alpha\beta}: \alpha E \beta\}$.
\item If $\alpha E \beta$ then $(\phi_\alpha^\nu)^{-1}\circ
  \phi_\beta^\nu$ converges to $z_{\alpha\beta}$ uniformly on compact
  subsets of $D \setminus \{z_{\beta\alpha}\}$.
\item If $\alpha E \beta$ then
\begin{equation}
\sum_{\gamma \in T_{\alpha\beta}} E(u_\gamma) = \lim_{\varepsilon \to
  0}\lim_{\nu \to \infty} E(
u^\nu|_{\phi_\alpha^\nu(B_\eps(z_{\alpha\beta}))}).
\end{equation}
\end{enumerate}
\end{definition}
Intuitively, one can imagine the M\"obius transformations
$\phi^\nu_\alpha$ as microscopes, focusing on subregions in the
domain to detect some piece of the limiting map. Item (3) in the
definition then says that different microscopes really capture
different phenomena. Item (4) is a way of phrasing that the limit
captures all the essential pieces. In particular, together with (2) it
implies that 
$$
E(\mathbf u) = \lim_{\nu\to \infty} E(u^\nu).
$$
Now Gromov's compactness theorem for disks can be stated as follows.

\begin{theorem}[{\bf Gromov compactness for holomorphic disks}]
Let $(X,\omega)$ be an exact symplectic manifold, let $L \subset X$ be a
compact Lagrangian submanifold, and let $J$ be an $\omega$-tame almost
complex structure on $X$. Suppose $u^\nu:(D,\p D) \to (X,L)$ is a
sequence of holomorphic disks with bounded energy such that all images
are contained in some compact subset of $X$. Then $u^\nu$ has a Gromov
convergent subsequence. 

Moreover, for a convergent sequence $u^\nu$ the limit is unique up to
equivalence. 
\end{theorem}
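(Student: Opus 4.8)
The plan is to establish the two assertions---existence of a Gromov convergent subsequence, and uniqueness of the limit up to equivalence---by the standard rescaling-and-induction scheme, adapted to the disk setting with Lagrangian boundary and exact target.

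\medskip

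\textbf{Existence.} First I would bound the diameters and derivatives of the maps. Since the images lie in a fixed compact set $K\subset X$ and the energies are uniformly bounded, the crucial input is the \emph{$\epsilon$-regularity} and \emph{monotonicity} for $J$-holomorphic disks with boundary on $L$: there is $\hbar>0$ such that any $J$-holomorphic disk through a point of $L$ with small energy has a definite amount of energy, and any such disk with $\|du\|$ large at an interior (or boundary) point carries at least $\hbar$ of energy on a small ball. This is where the taming condition and compactness of $K$ enter, together with the fact that $\omega=d\lambda$ kills sphere bubbles so only disks bubble off. Given this, I would argue by contradiction: if $\sup_D\|du^\nu\|\to\infty$, pick points $w^\nu$ and scales $R^\nu\to\infty$ where the gradient blows up, apply Hofer's lemma to get a near-maximum, and rescale by M\"obius transformations $\phi^\nu$ centered at $w^\nu$. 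The rescaled maps $u^\nu\circ\phi^\nu$ then converge in $C^\infty_{\mathrm{loc}}$ (by elliptic bootstrapping from a uniform $C^1$ bound, using interior estimates away from the boundary and boundary estimates near $\partial D$, with the totally real boundary condition along $L$) to a nonconstant $J$-holomorphic disk or plane; removal of singularities promotes a plane to a disk or sphere, and exactness rules out spheres while forcing nonconstant planes to be disks. Each such bubble absorbs at least $\hbar$ of energy, so the process terminates after finitely many steps and the combinatorics of which subregions produce which bubbles is recorded by a finite tree $(T,E)$. One then checks the four items of the definition of Gromov convergence hold for the resulting data $(\mathbf u,\mathbf z)$: item (2) is the $C^\infty_{\mathrm{loc}}$ convergence just obtained, item (3) says the rescaling maps from adjacent vertices degenerate onto a single node---this is built into the choice of the $\phi^\nu_\alpha$---and item (4), the exact matching of energies on shrinking balls, follows from the $\epsilon$-regularity estimate, which shows no energy escapes to the ``neck'' regions in the limit (the zero-area annulus lemma). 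Stability of the resulting tree---condition (3) in the definition of a stable tree---is arranged by collapsing any ghost component with fewer than three special points.

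\medskip

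\textbf{Uniqueness.} Suppose $u^\nu$ Gromov converges both to $(\mathbf u,\mathbf z)$ over $T$ and to $(\mathbf u',\mathbf z')$ over $T'$, witnessed by rescaling sequences $\phi^\nu_\alpha$ and $\psi^\nu_{\alpha'}$. I would compare the two families of ``microscopes'': for each pair $(\alpha,\alpha')$ examine the sequence $(\psi^\nu_{\alpha'})^{-1}\circ\phi^\nu_\alpha$ in $PSL(2,\R)$. By the standard trichotomy for sequences of M\"obius transformations, after passing to a subsequence this either converges in $PSL(2,\R)$---in which case the two components $u_\alpha$ and $u'_{\alpha'}$ agree up to a M\"obius reparametrization, giving the required $\phi_\alpha$---or it degenerates, focusing into a point, which by item (4) forces all the energy of $u_\alpha$ to sit ``beyond'' a node of the other tree and hence exhibits $u_\alpha$ as a bubble tree hanging off $u'_{\alpha'}$. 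Matching these identifications across all vertices, and using the connectedness and no-cycles axioms to propagate the matching along edges, produces a tree isomorphism $f:T\to T'$ with $u'_{f(\alpha)}\circ\phi_\alpha=u_\alpha$ and the nodal/marked points corresponding as in the definition of equivalence. The bookkeeping---showing that the map $f$ one builds is genuinely a tree isomorphism and not merely a tree homomorphism---uses stability (no ghost bubbles with fewer than three special points can be inserted or deleted on either side).

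\medskip

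\textbf{Main obstacle.} The technical heart, and the step I expect to be hardest, is the analysis near the boundary $\partial D$: establishing the $\epsilon$-regularity, monotonicity, and elliptic $C^\infty_{\mathrm{loc}}$ estimates \emph{up to and along the free boundary on $L$}, where one must work with the Cauchy--Riemann equation with totally real boundary condition rather than in the interior, and correspondingly the removal-of-singularities and ``no energy in the neck'' lemmas for \emph{half}-disks. Interior arguments are classical (as in \cite{MS:04}); the boundary versions require the doubling trick or Schwarz reflection across $L$ together with the a priori estimates in the relevant weighted Sobolev or H\"older spaces, and this is precisely the content carried out in \cite{Fr:08} and \cite{Liu:02}, which I would invoke rather than reproduce. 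Given those boundary estimates, the rescaling induction and the $PSL(2,\R)$-trichotomy bookkeeping proceed exactly as in the closed case.
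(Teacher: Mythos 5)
The paper does not actually prove this theorem: it states it, defers the proof entirely to Frauenfelder's exposition \cite{Fr:08} (with \cite{Liu:02} and \cite{Hu:97} as related references), and instead illustrates the bubbling phenomenon in the concrete setting of degree-$d$ self-maps of the disk in Example~\ref{ex:noncompact}. Your outline is the standard rescaling-and-induction argument that those references carry out in detail: $\epsilon$-regularity and monotonicity to quantize bubble energy, Hofer's lemma plus rescaling to extract bubbles, the no-energy-in-the-neck annulus lemma to verify item (4) of the convergence definition, and the $PSL(2,\R)$ trichotomy applied to $(\psi^\nu_{\alpha'})^{-1}\circ\phi^\nu_\alpha$ for uniqueness. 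Since you explicitly invoke \cite{Fr:08} and \cite{Liu:02} for the boundary regularity, removal of singularities at the boundary, and the half-disk versions of the neck estimates --- exactly the content the paper itself declines to reproduce --- your proposal is at least as complete as the text it is being compared against, and it correctly identifies where the real analytic work lies.

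One small imprecision worth fixing: exactness does not ``force nonconstant planes to be disks.'' The dichotomy is between interior blow-up, where rescaling produces a finite-energy plane whose singularity removal yields a nonconstant holomorphic \emph{sphere} (excluded by $\omega=d\lambda$ and Stokes), and blow-up at (or approaching) $\partial D$, where rescaling produces a half-plane with boundary on $L$ that extends to a disk. So in the exact case interior bubbling is ruled out altogether and all bubbles are boundary disk bubbles; this is the mechanism behind the claim that the limit configurations are trees of disks, and it is also the place where compactness of $L$ and the taming condition are used to run the boundary version of the estimates. With that rephrasing your sketch matches the intended proof.
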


A careful exposition of the proof of this theorem, without the
simplifying assumption that $X$ is exact, can be found in
\cite{Fr:08}. 
Rather than going into details here, I will illustrate the 
phenomenon by considering a specific example.

\begin{example}\label{ex:noncompact}
%
The situation is interesting already for holomorphic disks with
boundary on $S^1 \subset \C$ with respect to the standard complex
structure $J_0$. By the maximum principle, these are
necessarily maps $D \to D$. They exist for all non-negative 
degrees, and the degree $d$ of the map equals the degree of its
restriction to the boundary circle. It turns out that the Maslov index
of the class $d \in \pi_1(S^1)$ is $2d$. Applying the general index
formula \eqref{eq:index}, one finds that 
$$
\dim \cM(d,J_0) = n-2 + 2d = 2d-1.  
$$

For $d=1$, this index equals 1. Indeed, the group of
holomorphic degree 1 maps, i.e. holomorphic automorphisms of $D$ is
3-dimensional (it can be identified with $\operatorname{PSL}(2,
\R)$). In fact, any element of this group can be written uniquely as a
composition $\varphi=\varphi_2 \circ \varphi_1$, where $\varphi_2$ is
a rotation given by $\varphi_2(z):= \varphi(1)z$ and
$\varphi_1=\varphi_2^{-1}\circ \varphi \in \Aut(D,1)$ is a map fixing
$1\in S^1=\del D$. Since we only consider the moduli space of maps up
to the equivalence relation of precomposing with an element of
$\Aut(D,1)$, the moduli space can be identified with $S^1$ by
recording the image of $1\in S^1$ under any map in the equivalence
class. In particular, the moduli space $\cM(1,J_0)$ is compact. 

Next consider a map of higher degree $d>1$. Again any such map can be
written as a composition $\varphi=\varphi_2 \circ \varphi_1$, where
$\varphi_1$ is a product (not composition!) of $d$ maps of degree $1$,
each fixing $1 \in S^1$, and $\varphi_2$ is a rotation. The map
$\varphi_1$ is characterized completely in terms of its zeros, counted
with multiplicities. In fact, if these zeros are 
$z_1$, \dots, $z_d$, then we have
$$
\varphi_1(z) = \prod e^{-i\theta_j} \frac {z-z_j}{-\bar{z_j}z+1},
\quad\text{\rm with }\theta_j= \arg(\frac{1-z_j}{1-\bar{z_j}}).
$$
By precomposing with an appropriate $\psi \in \Aut(D,1)$, we may
always arrange that one of the zeros, say $z_d$, equals 0. The
coordinates of the other zeros give $2(d-1)$ free local parameters for
$\varphi_1$, and together with the rotation parameter for $\varphi_2$
we get $2d-1$ as predicted by the dimension formula above.

The moduli space $\cM(d,J_0)$ of degree $d>1$ self-maps of the disk $D$ is
noncompact. In fact, consider representative maps $\varphi^{(n)}$ of a
sequence of points in the moduli space and orderings of the zeros
$z_j^{(n)}$ of $\varphi^{(n)}$ such that $z^{(n)}_d=0$ (as above, this
can be arranged by precomposing a given representative with some
element of $\Aut(D,1)$, which does not change 
the equivalence class). After passing to a subsequence, we get
convergence of the $z^{(n)}_j$ to some limiting $z^\infty_j\in D$ 
for each $j=1,\dots,d$. The formula above still makes sense in the
limit, but the ``zeros'' $z^\infty_j$ which lie on the circle $S^1$
contribute a trivial factor of $1$ to the product. So if there are $0<d'
<d$ of these ``phantom'' zeros, the naive limiting map
$\varphi^\infty$ will have degree $d-d'$ and so it is not an element
of $\cM(d)$.  

Notice that in the above discussion, we made two arbitrary choices: a
choice of ordering $z^{(n)}_j$ of the zeros of $\varphi^{(n)}$, and
the choice to always reparametrize so that $z^{(n)}_d=0$. Suppose for
definiteness that with these choices we have $z^\infty_1 \in
S^1$. Then  there are unique maps $\psi^{(n)} \in \Aut(D,1)$ such that
$\psi^{(n)}(z^{(n)}_1)=0$, and so we get a different sequence of
representatives $\varphi^{(n)} \circ \psi^{(n)}$ of the 
same divergent sequence of points in the moduli space $\cM(d)$. 
Just as above we get a, generally different, limiting map
$\varphi^{\infty,*}$ for a suitable subsequence. Note that in this
reparametrization, we will have $z^\infty_d\in S^1$, and so
$\varphi^{\infty,*}$ has degree $d''<d$.  

If we would analyse the situation fully, we would recover, for
a suitable subsequence, the existence of finitely many sequences of
M\"obius transformations $\psi^{(n)}_\alpha$, such that the
reparametrized maps $\varphi^{(n)} \circ \psi^{(n)}_\alpha$ converge
to some limiting map $\varphi_\alpha$ of degree $d_\alpha>0$ in such a
way that $\sum d_\alpha=d$. In addition, these maps fit together and
form a disk tree as described in the compactness theorem above.
\end{example}

\subsection*{The compactified moduli space}

Very roughly, the compactness theorem asserts that one can compactify
a given space $\cM(a,J)$ by adding pieces built out of moduli spaces
$\cM(b,J)$ with $0<E(b)<E(a)$. This compactification is often denoted by
$\overline{\cM}(a,J)$. It admits an obvious stratification, where the
stratum of codimension $k$ corresponds to stable trees of disks
modelled on trees with exactly $k$ (unoriented) edges. Indeed, the
heuristic dimension count proceeds as follows. Denote by $r_\alpha$
the number of special points on the disk associated to $\alpha \in T$
and by $a_\alpha$ its relative homotopy class. Note that $\sum
r_\alpha = 2k+1$, where $k$ is the number of edges of the tree, since
we had one marked point to start with and each edge gives rise to two
nodal points. The formal dimension of the moduli space of disks
associated to the vertex $\alpha\in T$ is
$$
n-3 +r_\alpha +\mu(\alpha).
$$
Requiring that the nodal points corresponding to an edge in $T$ are
mapped to the same point in $L$ gives $n=\dim L$ constraints. Putting
these together, we find that the formal total dimension equals
$$
(k+1)(n-3) + 2k+1 + \mu(a) -kn = n-2+ \mu(a) -k.
$$
{\em If}, for a given $J$, all the moduli spaces appearing in the
compactification $\overline{\cM}(a,J)$ were transversely cut out, one
could hope to prove a {\em gluing theorem}, asserting that in fact the
compactified moduli space is a manifold with boundary and corners. 

This is generally too much to ask. In \cite{FOOO}, Fukaya, Oh, Ohta
and Ono describe a procedure to put a so-called {\em Kuranishi
  structure} on the compactified moduli spaces. Without going into
details, this roughly means that these spaces admit fundamental chains
{\em that make them function as if they were manifolds with corners}. 
Theorem~\ref{thm:blackbox} below should be understood in this sense.

Presumably, the ongoing polyfold project of Hofer, Wysocki and Zehnder
(cf. \cite{Ho:06}) will eventually lead to an alternative approach to
the problem of putting enough structure on the compactified moduli space
to prove a statement like Theorem~\ref{thm:blackbox}.

\section{Gromov's Theorem and Fukaya's refinement}
\label{sec:main}

\subsection*{No exact Lagrangian submanifolds in $\C^n$}

As already mentioned, it is instructive to review the proof for the
following well-known theorem of Gromov. 

\begin{theorem}[Gromov, 1985]\label{thm:gromov-notexact}
If a compact manifold $L$ admits a Lagrangian embedding into $\C^n$,
then $H^1(L;\R) \neq 0$.
\end{theorem}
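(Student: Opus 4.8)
The plan is to argue by contradiction using a non-compactness/degeneration argument in the spirit of Example~\ref{ex:noncompact}, exploiting the exactness of $\omega_0$. Suppose $L \subset \C^n$ is a compact Lagrangian submanifold with $H^1(L;\R)=0$. Since $\omega_0 = d\lambda$ is exact and $L$ is Lagrangian, $\lambda|_L$ is a closed $1$-form on $L$; the vanishing of $H^1(L;\R)$ forces $\lambda|_L = df$ for some function $f:L \to \R$, i.e.\ $L$ would be an \emph{exact} Lagrangian. The contradiction will come from producing a holomorphic disk with boundary on $L$ of positive area, since such a disk $u:(D,\p D)\to(\C^n,L)$ satisfies
$$
0 < E(u) = \int_D u^*\omega_0 = \int_{\p D} u^*\lambda = \int_{\p D} u^*df = 0,
$$
which is absurd. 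So everything reduces to the existence of one non-constant holomorphic disk with boundary on $L$.

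To produce such a disk, first I would choose the standard complex structure $J_0$ on $\C^n$ (or a generic tame $J$) and try to run a continuation/compactness argument. The most robust route is Gromov's original ``filling'' idea: since $L$ is compact it lies in a large ball, and one can look at the space of $J$-holomorphic disks of a fixed small Maslov index (index $2$, so that after quotienting by $\Aut(D,1)$ the moduli space $\cM(a,J)$ has dimension $n-2+2 = n$, matching the dimension of $L$ via the evaluation map at the boundary marked point). Alternatively — and this is the argument foreshadowed by Example~\ref{ex:nosol} — one deforms through a family of tame almost complex structures $J_t$, $t\in[0,1]$, interpolating between $J_0$ (for which an explicit holomorphic disk exists, e.g.\ coming from a large disk in an affine complex line meeting $L$, after an isotopy) and some $J_1$ adapted to $L$; one shows the relevant moduli space is non-empty for $t=0$, cannot become empty without a disk bubbling off (Gromov compactness for disks, using the uniform energy bound from exactness — here is where $\omega_0 = d\lambda$ and boundedness of $L$ give the a priori bound), and concludes non-emptiness for all $t$, in particular a non-constant disk exists.

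The key analytic inputs are: (i) Gromov compactness for holomorphic disks with Lagrangian boundary (stated in the excerpt), which guarantees that a sequence of disks with bounded energy and images in a compact set degenerates only to a stable tree of disks, each piece still a non-constant holomorphic disk with boundary on $L$ of positive energy; (ii) the a priori energy bound, which on $\C^n$ follows from the monotonicity-type estimate or simply from $E(u) = \int_{\p D}u^*\lambda \le \|\lambda|_L\|_{C^0}\cdot\mathrm{length}(\p D)$ once one controls boundary length, or more robustly from the fact that in a fixed relative homotopy class $E$ is a topological quantity; (iii) a non-triviality/non-emptiness statement in \emph{some} almost complex structure to seed the argument.

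The main obstacle I expect is precisely (iii) together with transversality: ensuring that the moduli space one starts with is non-empty and that the count (or at least the non-emptiness) is preserved under deformation requires either an explicit model disk or a more global argument (Gromov's trick of capping $L$ off, or studying the structure at infinity of $\C^n$). The cleanest implementation is the contradiction via areas above: assume $L$ exact, build \emph{any} non-constant $J_0$-holomorphic disk with boundary on $L$ by a non-emptiness argument, and derive $0 < E(u) = 0$. Making the non-emptiness rigorous — i.e.\ showing that an exact compact Lagrangian in $\C^n$ must bound a holomorphic disk — is the heart of the matter, and is exactly where Gromov's original argument does its real work; I would expect the author's proof to supply this via a careful version of the deformation argument sketched in Example~\ref{ex:nosol}, pushing $\alpha$ (or the analogous parameter) and watching the moduli space, using that it can only fail to persist by producing a bubble, which is the disk we want.
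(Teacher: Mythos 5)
Your reduction is correct and matches the endgame of the paper's proof: once a nonconstant holomorphic disk $v:(D,\p D)\to(\C^n,L)$ exists, positivity of $\int_D v^*\omega_0$ plus Stokes' theorem shows $\lambda|_L$ (equivalently $\eta|_L$) is closed but not exact, so $H^1(L;\R)\neq 0$; phrasing it as a contradiction with exactness of $L$ is the same argument. But the existence of the disk is the entire content of the theorem, and here your proposal has a genuine gap, indeed a circular step: you propose to seed a deformation argument through almost complex structures $J_t$ with ``an explicit holomorphic disk'' at $J_0$, e.g.\ coming from a large disk in an affine complex line meeting $L$. Such a disk does not have boundary on $L$, and no explicit nonconstant disk with boundary on $L$ is available for any $J$ --- under your contradiction hypothesis ($L$ exact) none exist at all, so a continuation argument that starts from a nonconstant disk cannot start. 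Likewise the Maslov-index-two moduli space you mention cannot be shown nonempty without essentially proving the theorem first.

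The missing idea, which is how the paper (following McDuff--Salamon, Section 9.2) gets off the ground, is to perturb the equation rather than $J$, and to seed with \emph{constant} disks. One fixes $a\in\C^n$ with $\|a\|\geq 2\sup_{z\in L}\|z\|$ and studies the space $\cN$ of solutions of the inhomogeneous equation $\p_s u + J_0\p_t u = \nabla H^{\lambda}_{s,t}(u)$, $\lambda\in[0,1]$, in the \emph{trivial} relative homotopy class, where $H^0=0$ and $H^1=\langle a,\cdot\rangle$. At $\lambda=0$ the solutions are exactly the constant disks, a copy of $L$; at $\lambda=1$ an explicit estimate (the mechanism of Example~\ref{ex:nosol}) shows there are no solutions. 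For generic $H$, $\cN$ is a smooth $(n+1)$-manifold whose only boundary is the $\lambda=0$ slice, and the evaluation $\ev:\cN\to\Lambda L$ would exhibit the cycle of constant loops $[L]\in C_n(\Lambda L)$ as a boundary; since this cycle is nontrivial in homology, $\cN$ cannot be compact. Bubbling analysis (rescaling, removal of singularities, and the absence of nonconstant holomorphic spheres in the exact manifold $\C^n$) then produces the nonconstant disk with boundary on $L$, after which your Stokes argument finishes. Without this inhomogeneous-perturbation device and the topological nontriviality of the constant-loop cycle forcing noncompactness, the ``produce a disk'' step in your proposal remains unproved.
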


\begin{proof}(Sketch)
I sketch the proof of this theorem given in \cite[Section
9.2]{MS:04}, slightly rephrasing the end of the argument in order to
make the relation to the following discussion even more apparent.

Fix a Lagrangian embedding $L \subset \C^n$, and choose a vector $a\in
\C^n$ with $\|a\| \geq 2\sup_{z\in L} \|z\|$. Consider the set $\cH
\subset C^\infty([0,1] \times D \times \C^n)$ of Hamiltonian functions
such that
$$
H_{s,t}^0(z)=0, \quad H_{s,t}^1(z) = \langle a, z \rangle.
$$
The idea is to consider, for a fixed $H \in \cH$, the moduli space
$\cN$ of maps $u:(D^2,\del D^2) \to (\C^n, L)$ satisfying the
following conditions: 
\begin{itemize}
\item $\p_s u + J_0 \p_t u = \nabla H^{\lambda}_{s,t}(u)$ for some
  $\lambda \in [0,1]$, and
\item the relative homotopy class $[u] \in \pi_2(\C^n,L)$ vanishes.
\end{itemize}
So for $\lambda=0$, we are considering holomorphic disks with boundary on
the Lagrangian $L$, and since the relative homotopy class vanishes, these
are precisely the constant maps. Note that we do not divide out any 
automorphisms here, since for positive $\lambda$ these have no reason to 
preserve the solution space to the equation.
One can prove that for fixed small
$\lambda>0$, there is still a compact $n$-dimensional family of
solutions. In fact, for generic choice of the Hamiltonian $H\in \cH$,
standard transversality techniques show that $\cN$ is a smooth
$n+1$-dimensional manifold, whose boundary consists of those elements
with $\lambda \in \{0,1\}$. 

On the other hand, a straightforward computation as in
Example~\ref{ex:nosol} shows that, for our
choice of $a$, there are no solutions to the equation with
$\lambda=1$. So if $\cN$ was compact, it would give a
smooth cobordism from $L$ to the empty set. Now consider the evaluation map 
$$
\mathrm ev: \mathcal N \to \Lambda L, \quad u \mapsto u|_{\del D^2}.
$$
From what we said above, the boundary of this $(n+1)$-chain in the free loop
space of $L$ is the cycle of constant loops $[L] \in C_n(\Lambda L)$. Since
this cycle is nontrivial in homology, $\cN$ cannot be compact. 

It follows from elliptic regularity theory (cf. \cite[Theorem
4.1.1]{MS:04}) that if compactness fails, there is a 
sequence $u_n \in \mathcal N$ such that $|du_n|_\infty \to \infty$ as
$n \to \infty$. Appropriately rescaling such a sequence and applying
removal of singularities as in \cite[section 4.2]{MS:04}, one finds
either a nonconstant holomorphic sphere or a nonconstant holomorphic
disk with boundary on $L$. Since $\C^n$ does not contain nonconstant
holomorphic spheres (such spheres would be contractible and have positive 
energy ,contradicting Stokes' theorem), the only possibility is the 
existence of some nonconstant holomorphic disk $v:(D^2,\del D^2) \to (\C^n,L)$.

Now the standard symplectic form $\omega= \sum_{j=1}^n dx_j \wedge dy_j$
is positive on all complex lines in $\C^n$, and hence on all the
tangent planes to the image of $v$, so we have
$$
\int_{D^2} v^*(\omega)>0.
$$
Moreover, $\omega= d\eta$, where e.g. $\eta=\sum_{j=1}^n x_j dy_j$,
and so Stokes' theorem implies that 
$$
\int_{S^1} v^*(\eta)>0.
$$
On the other hand, the Lagrangian condition states that $\omega=d\eta$
vanishes pointwise when restricted to $L$. Combining these
observations, it follows that $\eta|_L$ is a closed 1-form
representing a nonzero class in $H^1(L;\R)$, and this proves the theorem. 
\end{proof}

\subsection*{The technical outcome of analysing holomorphic disks}

What was the essence of the proof of Gromov's theorem? Basically, the
point is that the space $\cN$ has a single boundary component,
corresponding to the space of constant disks, and hence for
topological reasons it cannot be compact. Analysing the breakdown of
compactness, we found holomorphic disks.  

The elements of $\cN$ do not appear to be holomorphic curves, due
to the nonzero right hand side $\nabla H^\lambda_{s,t}$ of the
equation. However, they can in fact be viewed as holomorphic 
maps into $\C^n \times D$ with respect to a family of almost complex
structures which have an off-diagonal term built out of this right
hand side, projecting holomorphically and with degree 1 to the
disk. This basic phenomenon was already present in Example~\ref{ex:nosol}. 

The stable map compactification in this particular case is given by
bubble trees of disks with exactly one main component, satisfying the
original equation, and all other bubbles being strictly holomorphic
(in the graph picture just mentioned, each of these has constant
projection to the disk). Fukaya's insight was to see that the new
boundary can be described in terms of string topology operations.  

Namely, for each $a\in \pi_2(\C^n,L)$, consider the compactified
moduli space $\cMbar(a):=\cMbar(a,J_0)$ of holomorphic disks in the
relative homotopy class $a$. 
Similarly, denote by $\cN(a)$ the space of solutions $u:(D,\del
D) \to (\C^n, L)$ to the equation
$$
\p_s u + J_0\p_tu= \nabla H^\lambda_{s,t} \quad \text{\rm for some
}\lambda\in[0,1] 
$$
in the relative homotopy class $a\in \pi_2(\C^n,L)$. Pretending as
always that transversality holds, this space is a manifold of dimension
$$
\dim \cN(a) = n + 1 + \mu(a),
$$
and we denote its compactification by $\overline{\cN}(a)$. 

Assuming the analysis can be made to work, both $\cMbar(a)$ and
$\overline{\cN}(a)$ can be thought of as chains 
on $\Lambda L$, simply by associating to each map of the disk its
restriction to the boundary circle. In fact, for $\cMbar$ there is a
slight ambiguity, since its elements are only well-defined up to
precomposition by $\varphi\in \Aut(D,1)$. But one can easily get
around this point, for example by replacing the actual map by a
parametrization proportional to arc length. 

To arrive at a clean statement, I will introduce some further
notation. Suppose we are given a suitable model $C_*(\Lambda L)$ for
the chains on the free loop space of $L$ with coefficients in $\Q$,
such that for each $a \in \pi_2(\C^n,L)$ the compactified spaces
$\cMbar(a)$ and $\overline{\cN}(a)$, with their respective 
evaluation maps to $\Lambda L$, define elements in it.
Note that $\Lambda L$ is a disjoint union over its connected
components $\Lambda_\gamma L$, which can be identified with conjugacy
classes $\gamma$ of elements of $\pi_1(L)$. It is convenient to
introduce a new complex $\cC$ whose underlying vector space is
$C_*(\Lambda L)$, but with grading shifted according to the Maslov
index, i.e.~an element in $C_k(\Lambda_\gamma L)$ will have degree
$k-\mu(\gamma)$ in $\cC$. 

The complex $\cC$ comes with a filtration by the symplectic area as
follows. It is shown in \cite[Prop.~4.1.4]{MS:04} that the infimum 
\begin{equation}\label{eq:minergy}
\hbar := \inf \{E(u) \mid u:(D,\del D) \to (\C^n,L) \text{ \rm nonconstant
  and holomorphic}\} 
\end{equation}
of the symplectic energy is strictly positive. Since each loop on $L$ bounds a disk in $\C^n$, we can view the energy as a map  $E:\pi_0(\Lambda L) \to \R$.
The energy of $c=\sum c_i\in \cC$ is now defined as $E(\sum c_i):= \sum E(c_i)$, where $E(c_i)$ is the energy of the free homotopy class of the loops parametrized by the chain $c_i$ (assumed to have connected domain of definition). Then the filtration
$\{\cF^k\}_{k \in \Z}$ on $\cC$ is given by  
$$
\cF^k := \{ c \in \cC: E(c) \geq k\hbar \}, \quad k \in \Z.
$$

Now consider the completion $\widehat{\cC}$ of $\cC$ with respect to
this filtration. This means that an element in $\widehat{\cC}$ will be a
possibly infinite sum $c= \sum c_i$ of chains $c_i\in \cC$, provided
that for each $k \in \Z$ there are only a finite number of summands
satisfying $c_i \not\in \cF^k$.  

With this definition, Gromov compactness and our grading convention
imply that  
$$
\cMbar := \sum_{a \in \pi_2(\C^n,L)\setminus \{0\}} \cMbar(a) 
$$
is a well-defined element of $\widehat{\cC}$ of degree $n-2$. In fact,
it is contained in the submodule $\cF^1\subset \widehat\cC$ of chains with
strictly positive area. Similarly,  
$$
\overline{\cN} := \sum_{a \in \pi_2(\C^n,L)} \overline{\cN}(a) 
$$
is an element of $\widehat{\cC}$ of degree $n+1$, as follows by applying 
the analogue of \eqref{eq:minergy} to the graph of elements of $\cN(a)$ in 
$D \times \C^n$.

As explained at the end of section 2, the compactification of $\cMbar(a)$ is 
obtained by adding lower dimensional strata built as fiber products of other 
such moduli spaces along evaluation maps. In particular, the codimension 1 
pieces $\lambda_1(\cMbar(a))$ are build from configurations of two holomorphic 
disks for which suitable boundary points are mapped to the same point in $L$. 
A more careful analysis reveals that, on the level of boundary values of the 
holomorphic maps, these configurations correspond to loop brackets 
$\lambda_2(\cMbar(a_1),\cMbar(a_2))$ with $a_1+a_2=a$. 
\begin{example}
\label{ex:2torus}
The mechanism just described can be seen in Example~\ref{ex:noncompact}, but maybe it 
is slightly easier to visualize for the standard Lagrangian torus 
$T^2=S^1 \times S^1 \subset \C^2$. Any class $a\in \pi_2(\C^2,T^2)$ is characterized by two integers $(d_1,d_2)$ giving the degrees of the projections to the two coordinate disks. For a moduli space $\cMbar((d_1,d_2),J_0)$ with respect to the standard complex structure $J_0$ on $\C^2$ to be nontrivial we need $d_j \ge 0$. Leaving aside the constant maps, the simplest moduli spaces $\cM((1,0),J_0)$ and $\cM((0,1),J_0)$ are compact, and in fact one can identify both with $T^2$. Indeed, the equivalence classes of the maps $u_{z_1,z_2}:D^2 \to \C^2$ given by $u_{z_1,z_2}(z)=(z_1z,z_2)$ for 
$z_1,z_2\in S^1$ represent all elements in $\cM((1,0),J_0)$, and similarly the maps $v_{z_1,z_2}(z)=(z_1,z_2z)$ represent all elements in $\cM((0,1),J_0)$. Note that because of the symmetries in the problem, in this particularly simple example the evaluation maps at $1$ are submersions, so that the geometric definition of the loop bracket $\lambda_2(\cM((0,1)),\cM((1,0)))$ can be used.

To illustrate the discussion above, we want to argue that 
\begin{align*}
\p \cMbar((1,1),J_0)&= \lambda_2(\cM((0,1)),\cM((1,0)))\\
&= \frac 12 \Big[\lambda_2(\cM((0,1)),\cM((1,0)))+\lambda_2(\cM((1,0)),\cM((0,1)))\Big].
\end{align*} 
Every element of the space $\widetilde{\cM}((1,1),J_0)$ is a map $u:D^2 \to \C^2$ of the form $u(z)=(z_1\phi_1(z),z_2\phi_2(z))$ with $z_j\in S^1$ and $\phi_j\in \Aut(D,1)$. The space $\cM((1,1),J_0)$ is obtained as the quotient by the diagonal action of $\Aut(D,1)$, so the equivalence class of $u$ as above is alternatively represented by both $u'(z)=(z_1z, z_2\psi(z))$ or $u''(z)=(z_1 \psi^{-1}(z),z_2z)$, where $\psi=\phi_2\circ\phi_1^{-1}$ is uniquely associated with the equivalence class of $u$.

Now consider a sequence $u_n \in \widetilde{\cM}((1,1),J_0)$ with $z_1$ and $z_2$ fixed but $\phi_{1,n}$ and $\phi_{2,n}$ varying. Assume that the  projection of the sequence to $\cM((1,1),J_0)$ leaves every compact subset, meaning that the corresponding sequence $\psi_n\in \Aut(D,1)$ in the above notation does the same. Elementary considerations now show that for a suitable subsequence $\psi_{n_k}$ there will be a point $w\in S^1$ such that 
\begin{enumerate}
\item[(i)] $\psi_{n_k} \longrightarrow w$ uniformly on compact subsets of $D \setminus \{1\}$ and $\psi_{n_k}^{-1} \longrightarrow 1$ uniformly on compact subsets of $D \setminus \{w\}$, or
\item[(ii)] $\psi_{n_k} \longrightarrow 1$ uniformly on compact subsets of $D \setminus \{w\}$ and $\psi_{n_k}^{-1} \longrightarrow w$ uniformly on compact subsets of $D \setminus \{1\}$.
\end{enumerate}
In both cases, the corresponding subsequences $u'_{n_k}$ and $u''_{n_k}$ converge to elements  $u'_\infty \in \cM((1,0),J_0)$ and $u''_\infty \in \cM((0,1),J_0)$, respectively, and the pair $(u'_\infty,u''_\infty)$ represents a boundary point of $\cMbar((1,1),J_0)$. One checks that as $z_1$ and $z_2$ and the sequence $\psi_{n_k}$ vary, one obtains all boundary points from this construction.
In case (i), $u''_\infty(1)=(z_1,z_2)$ and $u'_\infty(1)=(z_1,z_2w)$, which is the unique intersection point of $u'_\infty(S^1)$ and $u''_\infty(S^1)$, and in case (ii) the roles are reversed. 
In particular, the boundary loops of the two limit disks concatenate to represent points in the loop bracket $\lambda_2(\cM((0,1),J_0),\cM((1,0),J_0))$.
\end{example}

We now return to the general discussion. Similarly to the case of $\cMbar$, the codimension 1 stratum for $\overline{\cN}(a)$ corresponds to 
stable maps consisting of one component satisfying the perturbed equation and 
one holomorphic disk, and so it is described by the loop brackets of the form 
$\lambda_2(\overline{\cN}(a_1),\cMbar(a_2))$. Depending on the precise 
technical implementation, the gluing along lower dimensional boundary strata 
might actually introduce more terms, corresponding to higher operations.

The main technical assertions which should come out of such an implementation 
can be formulated as the following theorem. To get a cleaner statement, I
have chosen to state it in slightly stronger form than is strictly
necessary. The concept of a filtered L$_\infty$ algebra which appears in the
statement is discussed in detail in the following section, where I
also give some algebraic perspective on the equations
\eqref{eq:fukaya1} and \eqref{eq:fukaya2}. 
\begin{theorem}\label{thm:blackbox}
Let $L \subset \C^n$ be a closed, oriented, spin Lagrangian
submanifold.

Then on the filtered, degree-shifted chain complex $\widehat{\cC}$
associated to a suitable chain model $C_*(\Lambda L)$ for the free
loop space $\Lambda L$ there exists a filtered L$_\infty$-algebra structure 
$\{\lambda_k\}_{k\geq 1}$ of degree $1-n$, whose bracket on
homology coincides with the loop bracket of string topology, and such
that
\begin{enumerate}
\item the union of moduli spaces $\cMbar$ gives rise to an element
  $\alpha \in \widehat{\cC}_+$ of degree $n-2$ satisfying 
\begin{equation}\label{eq:fukaya1}
\sum_{k=1}^\infty (-1)^{\frac {(k-1)k}{2}}\frac 1 {k!}
\lambda_k(\alpha,\cdots,\alpha) =0. 
\end{equation}
\item the union of moduli spaces $\overline{\cN}$ gives rise to an
  element $\beta \in \widehat{\cC}$ of degree $n+1$
  satisfying  
\begin{equation}\label{eq:fukaya2}
\sum_{k=1}^\infty (-1)^{\frac {(k-2)(k-1)}{2}}\frac 1 {(k-1)!}
\lambda_k(\beta,\alpha,\cdots,\alpha) = [L],
\end{equation}
where $[L]\in C_n(\Lambda L, \Q)$ denotes the chain of constant loops.
\end{enumerate}
\end{theorem}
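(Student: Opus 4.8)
The idea is to deduce everything from a chain-level incarnation of string topology together with the stratified structure of the compactified moduli spaces from Section~\ref{sec:disks}; the holomorphic curve analysis plays the same role here that it played in the proof of Theorem~\ref{thm:gromov-notexact}, only organized globally over all relative homotopy classes at once. First I would fix a chain model $C_*(\Lambda L)$ for the free loop space --- a cubical, de Rham, or Morse-theoretic model --- chosen so that the Chas--Sullivan loop bracket and the higher homotopies it generates are defined on the nose at the chain level, and so that the evaluation maps $\cMbar(a) \to \Lambda L$ and $\overline{\cN}(a) \to \Lambda L$ (after reparametrizing the disk boundaries proportionally to arclength) define chains of the expected degree in it. Passing to the degree-shifted, area-filtered completion $\widehat{\cC}$ is then formal. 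The operations $\{\lambda_k\}$ are assembled from $\lambda_1 = \partial$, $\lambda_2$ the loop bracket, and the higher $\lambda_k$ ($k\geq 3$) recording the failure of the bracket to satisfy the Jacobi identity on the nose; that these satisfy the generalized Jacobi identities of a filtered $L_\infty$-algebra of degree $1-n$, with bracket the loop bracket on homology, is a chain-level refinement of the known string topology structure.

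Next, for each nonzero $a \in \pi_2(\C^n,L)$ I would analyze $\partial\,\cMbar(a)$. By the compactness theorem and the dimension count in Section~\ref{sec:disks}, the codimension-one boundary consists of two-vertex stable disk trees, i.e. pairs $(u_1,u_2)$ of holomorphic disks with $[u_1]+[u_2]=a$ glued at a boundary point; on boundary values this is exactly (half of) the loop bracket $\lambda_2(\cMbar(a_1),\cMbar(a_2))$ with $a_1+a_2=a$, as illustrated in Example~\ref{ex:2torus}. Summing over all $a$ and all tree topologies --- the higher-codimension strata being iterated such fiber products --- one recognizes $\partial(\sum_{a\neq 0}\cMbar(a))$ as the left side of \eqref{eq:fukaya1}: the factor $1/k!$ comes from unordering the $k$ equal copies of $\alpha$ attached to a $k$-input vertex, and the sign $(-1)^{(k-1)k/2}$ is the one dictated by the Koszul conventions for the $L_\infty$-structure. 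The area filtration guarantees convergence in $\widehat{\cC}$, since every term lies in $\cF^1$ and only finitely many $a$ have $E(a)$ below any given bound.

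The same bookkeeping applied to $\beta = \sum_a \overline{\cN}(a)$ yields \eqref{eq:fukaya2}. Now $\overline{\cN}(a)$ carries the extra parameter $\lambda$, so its codimension-one boundary has three types of strata: the slice $\lambda=0$, which --- since the relative class is prescribed and the equation becomes $\bar\partial_{J_0}u=0$ --- consists of constant disks and contributes exactly $[L]\in C_n(\Lambda L)$; the slice $\lambda=1$, which is empty for our choice of the vector $a$ by the area estimate of Example~\ref{ex:nosol} (which does not see the homotopy class); and degenerations where a genuine holomorphic disk bubbles off the main, perturbed component, contributing $\lambda_2(\overline{\cN}(a_1),\cMbar(a_2))$, with the higher strata giving the $\lambda_k(\beta,\alpha,\dots,\alpha)$ terms --- now only $k-1$ slots carry the unordered copies of $\alpha$, whence the coefficient $(-1)^{(k-2)(k-1)/2}/(k-1)!$. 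Algebraically, \eqref{eq:fukaya2} says that $[L]$ becomes a boundary for the $\alpha$-twisted differential, which is the statement that the next sections exploit just as Gromov's argument exploited the nontriviality of $[L]$.

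The hard part --- and the reason the result is stated as a black box --- is not this formal combinatorics but the analytic input beneath it: arranging transversality for all the spaces $\cM(a,J_0)$, $\cN(a)$ and their compactifications simultaneously, which no single $J$ achieves and which forces one to work with Kuranishi structures / virtual fundamental chains as in \cite{FOOO}, or with polyfolds, cf.~\cite{Ho:06}; proving the gluing theorems that identify the boundary strata with the fiber products above at the level of these virtual chains, with orientations matching (this is where the spin hypothesis enters); and --- most subtly --- showing that the chain model $C_*(\Lambda L)$ and the operations $\{\lambda_k\}$ can be chosen compatibly with that virtual-chain boundary structure. The apparent freedom in building the chain model is in fact severely constrained by this last requirement, and reconciling the two sides is the principal open problem.
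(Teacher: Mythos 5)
The paper offers no proof of Theorem~\ref{thm:blackbox}: it is stated deliberately as a black box, supported only by the heuristic discussion of the boundary strata of $\cMbar(a)$ and $\overline{\cN}(a)$ in Sections~\ref{sec:disks} and~\ref{sec:main}, and your outline follows exactly that discussion --- codimension-one strata as loop brackets, $[L]$ from the constant disks at $\lambda=0$, higher $\lambda_k$ from deeper strata, plus an honest identification of the unresolved analytic core (transversality via Kuranishi/polyfold structures, gluing, orientations from the spin hypothesis, and a chain model of $C_*(\Lambda L)$ compatible with the virtual boundary structure). So you take essentially the same route as the paper and, like the paper, stop at a strategy rather than a proof; note only that your boundary bookkeeping for $\overline{\cN}(a)$ quietly passes over the $\lambda=0$ faces with $a\neq 0$ (where the perturbation vanishes but the disks are non-constant and still parametrized), a point the paper likewise leaves implicit.
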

I will treat this theorem as a black box, and deduce the main results in the
introduction from it by using abstract algebraic arguments and some
3-manifold topology.

\section{Some algebraic properties of L$_\infty$ algebras}
\label{sec:l-infty}

For a graded vector space $C = \oplus_{d\in \Z} C_d$ we denote by $C[n]$ 
the vector space with grading shifted by $n$, i.e.  $C[n]_d=C_{d+n}$.  
On the $k$-fold tensor product $C \otimes \cdots \otimes C$, we consider
two actions of the permutation group $S_k$. In the first one, a permutation 
$\rho \in S_k$ acts on some tensor product of elements $c_i\in C$ of pure 
degrees $|c_i|$ via
$$
\rho \cdot (c_1 \otimes \cdots \otimes c_k) = \eps(\rho;c_1,\dots,c_k) \cdot c_{\rho(1)} \otimes \cdots \otimes c_{\rho(k)}
$$ 
with $\eps(\rho;c_1,\dots,c_k) = (-1)^{\sum_{i<j,\rho(i)>\rho(j)} |c_i|\cdot |c_j|}$. The quotient is the $k$th symmetric power $S^kC$ of $C$, whose decomposable elements we write as $c_1\cdots c_k$. The second action is the first one twisted by the sign representation,
$$
\rho \cdot (c_1 \otimes \cdots \otimes c_k) = \operatorname{sgn}(\rho) \eps(\rho;c_1,\dots,c_k)\cdot c_{\rho(1)} \otimes \cdots \otimes c_{\rho(k)}.
$$ 
The quotient is the $k$th exterior power $\Lambda^k C$ of $C$, whose elements are usually denoted by $c_1 \wedge \cdots \wedge c_k$. With these definitions, for an element $c\in C$ of odd degree we have $c\cdot c=0$, but $c \wedge c \ne 0$.

\begin{definition}
An L$_\infty$ algebra of degree 0 consists of a graded vector space
$C$ and a sequence of multilinear operations
$$
\lambda_k: \Lambda^k C \to C, \quad k \geq 1
$$
of degree $|\lambda_k|=k-2$ satisfying the sequence of quadratic
relations 
\begin{equation}\label{eq:l-infty}
\sum_{k_1+k_2=k+1,\atop \rho\in S_{k}} \pm \frac 1
  {k_1!(k-k_1)!} \lambda_{k_2}(\lambda_{k_1}(c_{\rho(1)}, \dots,
  c_{\rho(k_1)}),c_{\rho(k_1+1)}, \dots,c_{\rho(k)})=0
\end{equation}
for each $k \geq 1$. (The signs are made explicit below.)

More generally, an L$_\infty$ algebra structure of degree $d$ on $C$ is
defined to be an L$_\infty$ structure of degree $0$ on $C[-d]$.
\end{definition}
\begin{remark}
If the vector space $C$ of an L$_\infty$ algebra of degree 0 is
concentrated in degree 0, then for degree reasons the only possibly
nontrivial operation is $\lambda_2$, and the relation for $k=3$ turns
out to be the Jacobi identity for $\lambda_2$, so we recover Lie
algebras as a special case. 
\end{remark}

\begin{remark}
If $\lambda_k=0$ for $k \geq 3$, then we recover the definition of a
dg Lie algebra. Indeed, the first relation reads $\lambda_1 \circ
\lambda_1=0$. The second relation shows that $\lambda_1$ is a
derivation of $\lambda_2$, and the third relation is again the Jacobi
identity. In general, the Jacobi identity holds ``up to homotopy''
given by $\lambda_3$, so it always holds for the induced bracket on
$H_*(C,\lambda_1)$. 
\end{remark}
To make the signs in the quadratic relations as well as other signs
below explicit, it is useful to give an alternative description. First
observe the graded linear isomorphism  
\begin{align*}
\sigma_k: (\Lambda^k C)[-k] &\to S^k(C[-1])\\
          c_1 \wedge \dots \wedge c_k & \mapsto (-1)^{\sum (k-i)|c_i|}
          c_1 \cdots c_k,
\end{align*}
where $|c_i|$ denotes the degree in $C$.
Next introduce operations $\ell_k:S^k(C[-1])\to C[-1]$ as $\ell_k=\sigma_1
\circ \lambda_k \circ \sigma_k^{-1}$ and note that with this degree
shift these are all of degree $-1$. Set $S(C[-1]):= \oplus_{k\geq 1}
S^k(C[-1])$ and observe that each of these operations can be
extended to a map $\hat\ell_k: S(C[-1]) \to S(C[-1])$ defined as
$$
\hat\ell_k(c_1 \cdots c_r) = \left \{ \begin{array}{cl}
0 & \text{\rm if } r <k\\
\sum_{\rho \in S_r} \frac {\eps(\rho,c_1,\dots,c_r)}{k!(r-k)!}
\ell_k(c_{\rho(1)} \cdots c_{\rho(k)})c_{\rho(k+1)} \cdots c_{\rho(r)}
  & \text{\rm if } r \geq k,
\end{array} \right.
$$
where $\eps(\rho,c_1,\dots,c_r)$ is the sign introduced above

Finally, one defines $\hat \ell:= \sum_{k \geq 1} \hat \ell_k:S(C[-1])
\to S(C[-1])$. Then the quadratic relations \eqref{eq:l-infty} (with the correct signs) are equivalent to the single equation
\begin{equation}\label{eq:l-infty2}
\hat \ell \circ \hat \ell = 0.
\end{equation}
The above passage from the operations $\lambda_k$
on $C$ to the operation $\hat \ell$ on $S(C[-1])$ is called the {\em bar
construction}. Conceptually, one views $S(C[-1])$ as a 
coalgebra via the comultiplication $\Delta: S(C[-1]) \to S(C[-1])
\otimes S(C[-1])$ given by 
$$
\Delta (c_1 \cdots c_r) = \sum_{r_1=1}^{r-1}\sum_{\rho\in S_r} \frac {\eps(\rho,
  c_1,\dots, c_r)}{r_1!(r-r_1)!} c_{\rho(1)} \cdots c_{\rho(r_1)}
\otimes c_{\rho(r_1+1)} \cdots c_{\rho(r)}.
$$
This map has the coassociativity property
$$
(\one \otimes \Delta) \circ \Delta = (\Delta \otimes \one) \circ
\Delta,
$$
and it also turns out to be cocommutative in the sense that
$\tau \circ \Delta = \Delta$, where $\tau:S(C[-1])\otimes S(C[-1]) \to
S(C[-1]) \otimes S(C[-1])$ is the signed permutation of the two factors.
Then $\hat\ell_k$ is the unique way to extend $\ell_k$ as a
coderivation, i.e.~as a map satisfying the co-Leibniz rule
$$
\Delta \hat\ell_k = (\hat \ell_k \otimes \one + \one \otimes \hat
\ell_k) \Delta.
$$
Conversely, one can prove that any coderivation  $D:S(C[-1]) \to S(C[-1])$ is
completely determined by its {\em linear part} $\pi_1 \circ D:S(C[-1]) \to
C[-1]$. So $\hat \ell$ is the unique coderivation
of degree $-1$ on $S(C[-1])$ such that the restriction of its linear part
to $S^k(C[-1])$ equals $\ell_k$. \newline 
It is also easy to see that the commutator
$[D_1,D_2]:= D_1 \circ D_2 - (-1)^{|D_1||D_2|} D_2 \circ D_1$ of two
homogeneous coderivations is a coderivation, and so in our example
above $\hat \ell \circ \hat \ell = \frac 1 2 [\hat \ell, \hat \ell]$
has this property. These remarks explain why the relation
\eqref{eq:l-infty2} is equivalent to the sequence of relations
\eqref{eq:l-infty}, since this sequence is obtained by restricting the
linear part of $\hat \ell \circ \hat \ell$ to $S^k(C[-1])$ for each $k
\geq 1$ (and precomposing with $\sigma_k$). 

So in summary, an L$_\infty$ structure on a graded vector space $C$ is
the same as a coderivation of square zero on the symmetric tensor
coalgebra $S(C[-1])$. 
\begin{remark}
I have adopted homological conventions here, whereas often in the
literature one finds cohomological conventions, where the $\lambda_k$
have degrees $2-k$, and in the bar construction one shifts degrees by
$1$ instead of $-1$. 
\end{remark}
\begin{definition}
Given two L$_\infty$ algebras $\cC=(C,\{\lambda_k\}_{k \geq 1})$ and
$\cC'=(C',\{\lambda'_k\}_{k \geq 1})$, a morphism from $\cC$ to $\cC'$
consists of a sequence of maps $\phi_k:\Lambda^k C \to C'$ of degrees
$|\phi_k|=k-1$ satisfying the sequence of relations
\begin{equation}
\sum_{k_1k_2=k+1} \pm \phi_{k_1} \circ \hat\lambda_{k_2} = \sum_{k_1 +
  \dots + k_r=k} \pm \frac 1 {r!} \lambda'_r \circ (\phi_{k_1} \otimes
\dots \otimes \phi_{k_r}) 
\end{equation}
for $k \geq 1$. 
\end{definition}
Again, to state the signs correctly, it is useful to pass to the
associated maps $f_k:S^k(C[-1]) \to C'[-1]$ of degree $0$ given by $f_k
= \sigma_1 \circ \phi_k \circ \sigma_k^{-1}$. Any such collection of
linear maps determines a unique morphism of coalgebras $e^f:
S(C[-1]) \to S(C'[-1])$, given by
$$
e^f(c_1 \cdots c_k) = \sum_{k_1 + \dots + k_r=k} \sum_{\rho\in S_k}
\frac {\eps(\rho,c_1,\dots,c_k)}{r!k_1!\cdots k_r!} (f_{k_1} \otimes
\cdots \otimes f_{k_r}) (c_{\rho(1)} \cdots c_{\rho(k)}).
$$
The fact that $\{\phi_k\}$ is a morphism of L$_\infty$ algebras can
now be stated equivalently (including the correct signs) as
\begin{equation}
e^f \hat \ell = \hat \ell' e^f.
\end{equation}

The first important result about L$_\infty$ algebras asserts that the
structure of an L$_\infty$ algebra can be transferred from a complex
$C$ to its homology with respect to $\lambda_1$, without the loss of
any essential information. More precisely, it is formulated as follows. 

\begin{theorem}\label{thm:HPT}
Suppose $\cC=(C,\{\lambda_k\}_{k \geq 1})$ is an L$_\infty$ algebra over a field of characterstic 0. Then there exists an L$_\infty$ algebra structure $\cH=(H_*(C,\lambda_1), \{\lambda'_k\}_{k \geq 2})$ on the homology which is homotopy equivalent to $\cC$.  
%
\end{theorem}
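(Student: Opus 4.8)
The plan is to realize this as an instance of the \emph{homotopy transfer theorem} (sometimes called the homological perturbation lemma or the minimal model theorem), which transfers an $L_\infty$-structure along a chain homotopy equivalence. First I would reduce to the case of $L_\infty$-algebras of degree $0$ by replacing $C$ with $C[-d]$, so that one may work with the operations $\ell_k$ on $S(C[-1])$ and the single square-zero coderivation $\hat\ell$ from the bar construction described above.

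The key input is a \emph{Hodge-type decomposition} of $(C,\lambda_1)$. Since we are over a field of characteristic $0$ (in fact any field suffices for this step), one can choose a splitting $C = B \oplus \cH' \oplus B'$ where $B = \im\lambda_1$, $\cH' \cong H_*(C,\lambda_1)$ is a chosen complement to $B$ in $\Ker\lambda_1$, and $B'$ is a complement to $\Ker\lambda_1$; then $\lambda_1|_{B'}: B' \xrightarrow{\ \sim\ } B$ is an isomorphism. This produces data $(i,p,h)$: a quasi-isomorphism $i: H_*(C,\lambda_1) \hookrightarrow C$, a projection $p: C \twoheadrightarrow H_*(C,\lambda_1)$, and a contracting homotopy $h: C \to C$ of degree $+1$ with $\id_C - i p = \lambda_1 h + h\lambda_1$, which one can moreover arrange to satisfy the \emph{side conditions} $h^2 = 0$, $ph = 0$, $hi = 0$ (the standard trick: replace $h$ by $h\lambda_1 h$, etc.).

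With this contraction in hand, I would define the transferred operations $\lambda'_k$ on $H := H_*(C,\lambda_1)$ by the familiar \emph{sum over trees} formula: $\lambda'_k$ is a signed sum, over all isomorphism classes of rooted binary (more generally, rooted) trees with $k$ leaves, of the composite that decorates each leaf by $i$, each internal vertex with $j$ inputs by $\lambda_j$, each internal edge by the homotopy $h$, and the root by $p$. Equivalently, and this is the cleanest way to make the signs automatic and to verify the $L_\infty$ relations all at once, one perturbs the bar-construction differential: the contraction $(i,p,h)$ on $C$ induces a contraction on the cofree cocommutative coalgebras $S(C[-1])$ and $S(H[-1])$, and the homological perturbation lemma applied to the perturbation $\hat\ell - \hat{\lambda_1}$ of $\hat\lambda_1$ yields a square-zero coderivation $\hat{\ell'}$ on $S(H[-1])$ together with coalgebra morphisms $e^f: S(H[-1]) \to S(C[-1])$ and $S(C[-1]) \to S(H[-1])$ intertwining the (co)differentials. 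Because the ambient differential on $S(C[-1])$ squares to zero by hypothesis \eqref{eq:l-infty2}, the perturbation lemma guarantees $\hat{\ell'} \circ \hat{\ell'} = 0$ automatically, so no quadratic relation needs to be checked by hand; the linear part of $\hat{\ell'}$ restricted to $S^k$ then gives the desired $\lambda'_k$, and $e^f$ gives an $L_\infty$-morphism $\cH \to \cC$ which is a quasi-isomorphism on the linear part, hence the asserted homotopy equivalence.

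The main obstacle is not conceptual but one of bookkeeping: verifying that the perturbation $\hat\ell - \hat{\lambda_1}$ is \emph{small} in the sense required by the perturbation lemma, i.e. that the relevant geometric series $\sum (h(\hat\ell - \hat{\lambda_1}))^n$ converges. In the graded setting this is handled by the observation that each application of the perturbation strictly increases the word length in the symmetric coalgebra (the higher $\hat\ell_k$ with $k\geq 2$ lower word length by $k-1 \geq 1$, while $h$ preserves it), so on each fixed summand $S^k(C[-1])$ only finitely many terms contribute and the series is a finite sum — this is exactly the tree-counting statement. One then must check the side conditions $h^2 = ph = hi = 0$ are preserved under the bar construction (they are, by the explicit formula for the induced homotopy on $S(C[-1])$) in order to get a genuine \emph{strong deformation retract} and hence a bona fide homotopy equivalence of $L_\infty$-algebras rather than merely a quasi-isomorphism. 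The characteristic-$0$ hypothesis enters only through the $\frac{1}{k!}$ and $\frac{1}{r!}$ coefficients appearing in the (co)commutative bar construction.
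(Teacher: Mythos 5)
Your proposal is correct, but it takes a different route from the text. The paper's own argument (following the reference it cites) is inductive and obstruction-theoretic: one picks a linear homotopy equivalence $\iota: H_*(C,\lambda_1)\to C$ with homotopy inverse $\pi$, sets $\phi_1=\iota$ and $\lambda'_2=\pi\circ\lambda_2\circ(\iota\otimes\iota)$, and then constructs the higher maps $\phi_k$ and operations $\lambda'_k$ simultaneously by induction, the point being that because $\iota$ induces an isomorphism on homology all the obstructions encountered at each stage vanish; no explicit formula for the transferred structure is recorded. You instead give the explicit perturbative construction: a Hodge-type splitting $C=\im\lambda_1\oplus\cH'\oplus B'$ producing a contraction $(i,p,h)$ with side conditions, then either the sum-over-trees formula or the homological perturbation lemma applied to the coderivation $\hat\ell$ on $S(C[-1])$, with locality in word length guaranteeing that each transferred operation is a finite sum. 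Your version buys closed formulas for the $\lambda'_k$ and for the $L_\infty$-quasi-isomorphism $e^f$, at the cost of the extra bookkeeping you acknowledge (checking that the perturbed differential on $S(H_*[-1])$ is again a coderivation, i.e.\ that the tensor/symmetrization trick and the side conditions are compatible with the coalgebra structure); the paper's inductive version is lighter on formulas but nonconstructive about the higher terms. Two small points: your phrase that the perturbation ``strictly increases the word length'' should read ``strictly decreases'' (your parenthetical remark that $\hat\ell_k$ with $k\geq 2$ lowers word length by $k-1$, while $h$ preserves it, is the correct statement and is what makes the geometric series terminate on each $S^k$), and note that both arguments use, as you say, characteristic $0$ only through the symmetrization coefficients, while the splitting itself needs only a field.
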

Here a {\it homotopy equivalence} between L$_\infty$ algebras is the
essentially obvious generalization of the classical notion. In
particular, it is an L$_\infty$ morphism which induces an isomorphism in the
homology of the underlying complexes. It is a theorem that every such
map admits a homotopy inverse. For detailed definitions and a proof of
these assertions, including the theorem, see e.g. \cite{LV}.

The construction of the homotopy equivalence starts with a linear
homotopy equivalence $\iota:H_*(C,\lambda_1) \to C$ given by choosing a
cycle in each homology class, which has a homotopy inverse $\pi:C \to
H_*(C,\lambda_1)$ given by projection along a complement of the image
of $\iota$. One sets $\phi_1=\iota$ and $\lambda'_2= \pi \circ
\lambda_2 \circ \iota \otimes \iota$, and constructs 
the higher maps $\phi_k, k \geq 2$ and operations $\lambda'_k$, $k \geq
3$ simultaneously by induction. The fact that the homologies of
the two complexes agree is used to prove that all relevant obstructions
vanish. 

\medskip

Now let $\cC=(C,\{\lambda_k\}_{k \geq 1})$ be an L$_\infty$
algebra. Suppose that $C$ is the completion of some
complex $C'$ with respect to a doubly infinite filtration $C' =
\cup_{k\in \Z} \cF'_k$ with $\cF'_k \supset \cF'_{k+1}$, so that
elements of $C$ are (possibly infinite) sums of elements of $C'$ of
the form 
$$
c= c_{-r} + \dots + c_{-1} + c_0 + c_1 + \dots, \quad c_k \in \cF'_k.
$$
Denote the induced filtration on $C$ by $\{\cF_k\}$. 
\begin{definition}
The L$_\infty$ structure on $C$ is called {\em filtered} if
$$
\lambda_k(\cF_{d_1},\dots,\cF_{d_k}) \subset \cF_{d_1 + \dots + d_k}.
$$
\end{definition}
In the following discussion, it is convenient to denote by $\bar c =
\sigma(c)$ the image of an element under the identity
map $\sigma:C \to C[-1]$ of degree $+1$.
An element $a \in \cF_1$ of degree $-1$ satisfying the equation
$$
\sum_{k \geq 1} \frac 1 {k!}\ell_k (\bar{a},\dots,\bar{a}) = 0
\quad \text{\rm in } C[-1]
$$
is called a {\em Maurer-Cartan element of $\cC$}. Since $a\in \cF_1$,
the left hand side of this equation is indeed a well-defined element
of $\cC$. Note that \eqref{eq:fukaya1} is an instance of this
equation. Also, this equation is equivalent to 
$$
\hat \ell(e^{\bar{a}})=0.
$$
Moreover, an easy calculation yields
\begin{lemma}
If $a\in C$ is a Maurer-Cartan element of $\cC$ and $b \in C$ is
arbitrary, then  
$$ 
\hat \ell (\bar{b} e^{\bar{a}}) = \sum_{k \geq 1}
\frac 1 {(k-1)!}\ell_k(\bar{b}, \bar{a}, \dots, \bar{a}) e^{\bar{a}}.
$$ 
In particular, the map $\hat\ell^{a}:C[-1] \to C[-1]$ given by
$$
\hat\ell^{a}(\bar{b}) = \hat\ell(\bar{b}e^{\bar{a}})e^{-\bar{a}} = 
\sum_{k \geq 1} \frac 1 {(k-1)!}\ell_k(\bar{b}, \bar{a}, \dots, \bar{a})
$$
is a differential.\hfill\qed
\end{lemma}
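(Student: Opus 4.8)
The plan is to prove the displayed identity by a direct expansion in the completed symmetric algebra and then to deduce the ``differential'' claim by a short formal manipulation. First I would note that, since $a$ has degree $-1$ in $C$, the element $\bar a=\sigma(a)$ has even degree $0$ in $C[-1]$; hence every sign $\eps(\rho,\dots)$ arising from permuting factors among copies of $\bar a$ together with the single extra factor $\bar b$ is trivial, so the computation is purely combinatorial. Writing $e^{\bar a}=\sum_{r\ge 0}\frac{1}{r!}\bar a^{r}$ (well defined in the completion because $\bar a\in\cF_1$ and the structure is filtered), so that $\bar b\,e^{\bar a}=\sum_{r\ge 0}\frac{1}{r!}\bar b\,\bar a^{r}$, I would apply each $\hat\ell_k$ to the $(r+1)$-factor monomial $\bar b\,\bar a^{r}$ and sort the $k$-element subsets of its factors according to whether they contain $\bar b$ or not. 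This gives, with no extra signs,
$$
\hat\ell_k(\bar b\,\bar a^{r})=\binom{r}{k-1}\ell_k(\bar b,\bar a,\dots,\bar a)\,\bar a^{r-k+1}+\binom{r}{k}\ell_k(\bar a,\dots,\bar a)\,\bar b\,\bar a^{r-k}.
$$

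Next I would multiply by $\frac{1}{r!}$ and sum over $r$; since $\frac{1}{r!}\binom{r}{k-1}=\frac{1}{(k-1)!\,(r-k+1)!}$ and $\frac{1}{r!}\binom{r}{k}=\frac{1}{k!\,(r-k)!}$, the trailing symmetric powers of $\bar a$ in each of the two families reassemble into a factor $e^{\bar a}$, and summing over $k\ge1$ yields
$$
\hat\ell(\bar b\,e^{\bar a})=\Bigl(\sum_{k\ge1}\frac{1}{(k-1)!}\ell_k(\bar b,\bar a,\dots,\bar a)\Bigr)e^{\bar a}+\Bigl(\sum_{k\ge1}\frac{1}{k!}\ell_k(\bar a,\dots,\bar a)\Bigr)\bar b\,e^{\bar a}.
$$
The coefficient in the second summand is precisely the left-hand side of the Maurer--Cartan equation satisfied by $a$, hence vanishes; this is the asserted formula, which in compact form reads $\hat\ell(\bar b\,e^{\bar a})=\hat\ell^a(\bar b)\,e^{\bar a}$. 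In particular $\hat\ell(\bar b\,e^{\bar a})\,e^{-\bar a}\in C[-1]$, so $\hat\ell^a$ is indeed a well-defined map $C[-1]\to C[-1]$, of degree $-1$ because each $\ell_k$ has degree $-1$ there.

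It then remains to check $\hat\ell^a\circ\hat\ell^a=0$. Here I would use that the identity just proved holds with \emph{any} element in place of $\bar b$: applying it with $\bar b$ replaced by $\hat\ell^a(\bar b)\in C[-1]$ and invoking \eqref{eq:l-infty2} gives
$$
\hat\ell^a\bigl(\hat\ell^a(\bar b)\bigr)\,e^{\bar a}=\hat\ell\bigl(\hat\ell^a(\bar b)\,e^{\bar a}\bigr)=\hat\ell\bigl(\hat\ell(\bar b\,e^{\bar a})\bigr)=0,
$$
and since left multiplication by the group-like element $e^{\bar a}$ is invertible (with inverse multiplication by $e^{-\bar a}$), hence injective, we conclude $\hat\ell^a(\hat\ell^a(\bar b))=0$. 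The main obstacle is the combinatorial bookkeeping of the first step --- splitting the subsets of factors correctly and recognizing the Maurer--Cartan sum precisely among the terms in which $\bar b$ lands in the trailing symmetric factor rather than inside $\ell_k$; everything after that is formal. Conceptually, the identity $\hat\ell(\bar b\,e^{\bar a})=\hat\ell^a(\bar b)\,e^{\bar a}$ expresses that $\hat\ell^a$ is the linear part of the structure obtained by twisting $\cC$ along the Maurer--Cartan element $a$.
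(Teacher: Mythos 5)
Your proof is correct, and it is precisely the ``easy calculation'' the paper leaves implicit (the lemma is stated there with no written proof): the combinatorial split of $\hat\ell_k(\bar b\,\bar a^r)$ according to whether $\bar b$ enters $\ell_k$, the vanishing of the second family by the Maurer--Cartan equation, and the deduction of $\hat\ell^a\circ\hat\ell^a=0$ from $\hat\ell\circ\hat\ell=0$ together with invertibility of multiplication by $e^{\bar a}$ are all exactly as intended, and your sign remark (that $\bar a$ has degree $0$ in $C[-1]$) is the right justification for the absence of signs.
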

Finally, we describe what happens to equations \eqref{eq:fukaya1} and
\eqref{eq:fukaya2} under morphisms.
\begin{proposition}\label{prop:pushforward}
Suppose $\{\phi_k\}_{k \geq 1}$ is a morphism between L$_\infty$
algebras $\cC$ and $\cD$ preserving filtrations as above. 
\begin{enumerate}
\item If $a \in C$ is a Maurer-Cartan element for $\cC$, then
  $a'\in D$ with $\bar{a}'=(\sum_{k} \frac 1 {k!}f_k
  (\bar{a},\dots,\bar{a}))$ is a Maurer-Cartan element for $\cD$.
\item If $a \in C$ is a Maurer-Cartan element and $b,c \in C$ satisfy
$$
\hat \ell^\cC (\bar{b} e^{\bar{a}}) = \bar{c}e^{\bar{a}},
$$
then the elements $a'$, $b'$ and $c'$ with
\begin{align*}
\bar{a}'&= \left(\sum_k\frac 1 {k!} f_k(\bar{a},\dots,\bar{a})\right), \\ 
\bar{b}'&= \left(\sum_k\frac 1 {(k-1)!}
  f_k(\bar{b},\bar{a},\dots,\bar{a})\right)\quad  
\text{\rm and}\\ 
\bar{c}'&= \left(\sum_k \frac 1 {(k-1)!}
  f_k(\bar{c},\bar{a},\dots,\bar{a})\right) 
\end{align*}
satisfy
$$
\hat \ell^\cD (\bar{b}'e^{\bar{a}'}) = \bar{c}'e^{\bar{a}'}.
$$
\end{enumerate}
\end{proposition}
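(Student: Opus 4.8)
The plan is to deduce both statements from the single intertwining relation $e^f\circ\hat\ell^\cC=\hat\ell^\cD\circ e^f$, which encodes the L$_\infty$-morphism $\{\phi_k\}$ via the bar construction, by recognizing $e^{\bar a}$ as a \emph{group-like} element of the symmetric coalgebra and $\bar b e^{\bar a},\bar c e^{\bar a}$ as elements \emph{primitive relative to} $e^{\bar a}$, and using that coalgebra morphisms preserve both notions. To make this precise I would first put a Hopf-algebra structure on $S(C[-1])$ — symmetric product as multiplication, and the full coproduct $\bar\Delta$ consisting of the cocommutative unshuffle coproduct of Section~\ref{sec:l-infty} together with the endpoint terms $x\otimes 1+1\otimes x$; over a field of characteristic $0$ this is a connected graded-commutative Hopf algebra, and $e^f$ is a coalgebra map for it (it fixes $1$ and intertwines the reduced coproducts). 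Since a Maurer--Cartan element $a$ has degree $-1$ in $C$, its image $\bar a=\sigma(a)$ has degree $0$ in $C[-1]$, so all Koszul signs below are trivial; in particular $\bar\Delta(\bar a)=\bar a\otimes 1+1\otimes\bar a$, whence $\bar\Delta(e^{\bar a})=e^{\bar a}\otimes e^{\bar a}$ and $\bar\Delta(\bar b e^{\bar a})=\bar b e^{\bar a}\otimes e^{\bar a}+e^{\bar a}\otimes\bar b e^{\bar a}$ (and similarly with $c$), and the same identities hold in $S(D[-1])$.

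The technical core is the claim that $e^f(e^{\bar a})=e^{\bar a'}$, $e^f(\bar b e^{\bar a})=\bar b' e^{\bar a'}$ and $e^f(\bar c e^{\bar a})=\bar c' e^{\bar a'}$, with $\bar a',\bar b',\bar c'$ exactly the expressions in the statement. As a coalgebra map $e^f$ carries group-like to group-like, so $e^f(e^{\bar a})$ is group-like; a group-like element of $S(D[-1])$ is invertible and equals the exponential of its logarithm (a primitive), i.e. $e^{\bar x}$ with $\bar x=\pi_1\big(e^f(e^{\bar a})\big)$, and evaluating the defining sum for $e^f$ on $e^{\bar a}$ while keeping only the $r=1$ contributions gives $\bar x=\sum_k\frac 1{k!}f_k(\bar a,\dots,\bar a)=\bar a'$. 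Likewise $e^f(\bar b e^{\bar a})$ is primitive relative to $e^{\bar a'}$; right-multiplying by $e^{-\bar a'}$ turns it into an honest primitive of $S(D[-1])$, which in characteristic $0$ lies in $D[-1]$ by the structure theorem for graded-commutative Hopf algebras, so $e^f(\bar b e^{\bar a})=\bar b' e^{\bar a'}$ with $\bar b'=\pi_1\big(e^f(\bar b e^{\bar a})\big)=\sum_k\frac 1{(k-1)!}f_k(\bar b,\bar a,\dots,\bar a)$, and the same for $c$. All the infinite sums appearing here lie in the filtered completions because $\{\phi_k\}$ preserves filtrations and $a\in\cF_1$.

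Granting this claim, both parts are formal. For (1),
$$
\hat\ell^\cD(e^{\bar a'})=\hat\ell^\cD\big(e^f(e^{\bar a})\big)=e^f\big(\hat\ell^\cC(e^{\bar a})\big)=e^f(0)=0,
$$
since $a$ is Maurer--Cartan; combined with $a'\in\cF_1$ and $|a'|=-1$ — both immediate from filtration-preservation and the degrees of the $f_k$ — this is precisely the Maurer--Cartan equation for $a'$ in $\cD$. For (2), using the hypothesis $\hat\ell^\cC(\bar b e^{\bar a})=\bar c e^{\bar a}$,
$$
\hat\ell^\cD(\bar b' e^{\bar a'})=\hat\ell^\cD\big(e^f(\bar b e^{\bar a})\big)=e^f\big(\hat\ell^\cC(\bar b e^{\bar a})\big)=e^f(\bar c e^{\bar a})=\bar c' e^{\bar a'}.
$$

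The step I expect to be the main obstacle is not any one of these identities but making the whole picture rigorous in the filtered/completed setting: one must check that $e^f$ extends to the completions and remains a coalgebra morphism there, and verify carefully that restricting the defining sum for $e^f$ to its group-like-preserving part really produces the normalizations $\frac 1{k!}$ and $\frac 1{(k-1)!}$ — this is where the symmetrization weights $\frac 1{r!\,k_1!\cdots k_r!}$ have to be tracked. Once those routine but fiddly bookkeeping points are settled, the proposition is a two-line consequence of the functoriality of the bar construction.
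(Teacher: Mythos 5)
Your proposal is correct, and its final step is exactly the paper's: both arguments reduce the two assertions to the intertwining relation $e^f\hat\ell^\cC=\hat\ell^\cD e^f$ together with the identities $e^f(e^{\bar a})=e^{\bar a'}$ and $e^f(\bar b\,e^{\bar a})=\bar b'e^{\bar a'}$ (applied to $b$ and to $c$), after which (1) and (2) are the same two-line computations you write down. Where you diverge is in how those identities are justified. The paper treats them as elementary: $e^f(e^{\bar a})=e^{\bar a'}$ is said to follow ``directly from the definitions,'' and the second is the explicitly stated auxiliary formula $e^f(\bar x e^{\bar y})=\bigl(\sum_{k\ge 1}\tfrac 1{(k-1)!}f_k(\bar x,\bar y,\dots,\bar y)\bigr)e^{\sum\frac 1{r!}f_r(\bar y,\dots,\bar y)}$, checked by expanding the defining sum for $e^f$ and tracking the weights $\tfrac1{r!k_1!\cdots k_r!}$. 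You instead derive them structurally: adjoin the unit to $S(C[-1])$, observe $e^{\bar a}$ is group-like and $\bar b e^{\bar a}$ is primitive relative to it, use that the coalgebra map $e^f$ preserves these notions, and identify the resulting primitive with its word-length-one component via the characteristic-zero fact that primitives of the (completed) symmetric Hopf algebra are exactly the generators. This buys a conceptual explanation of why $\bar a'$ and $\bar b'$ must have the stated form, at the cost of extra scaffolding the paper avoids: you must extend $e^f$, the product and the (co)product to the energy-completions, check topological nilpotence so that $\log$ of the group-like converges (this is where $a\in\cF_1$ and filtration-preservation are genuinely used), and in the end you still perform the same $r=1$ bookkeeping to compute $\pi_1 e^f(e^{\bar a})$ and $\pi_1 e^f(\bar b e^{\bar a})$ -- which is essentially the paper's direct verification anyway. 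So the content matches; your route is a more structural, slightly longer packaging of the same key lemma, and your own flagged ``fiddly points'' (completions, normalization constants) are precisely the places where the paper's bare-hands computation is the shorter path.
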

\begin{proof}
To prove the first assertion, just observe that for a Maurer-Cartan
element $a\in C$ one has
$$
0= e^f \hat \ell^\cC (e^{\bar{a}}) = \hat \ell^\cD e^f (e^{\bar{a}}) =
\hat \ell^\cD(e^{\bar{a}'} ),
$$
where the equality $e^f (e^{\bar{a}})=e^{\bar{a}'}$ follows
directly from the definitions.

To prove the second assertion, one first checks that for any elements
$\bar{x}, \bar{y} \in C[-1]$ 
$$ 
e^f(\bar{x} e^{\bar{y}}) =  \left(\sum_{k \geq 1} \frac 1 {(k-1)!} f_k(\bar{x},\bar{y},\dots, \bar{y})\right) 
e^{\sum \frac 1 {r!} f_r(\bar{y},\dots, \bar{y})}.
$$
Using this, we compute 
\begin{align*}
e^f \hat \ell^\cC (\bar{b} e^{\bar{a}}) 
 = \hat \ell^\cD e^f (\bar{b} e^{\bar{a}}) 
 = \hat \ell^\cD (\bar{b}'e^{\bar{a}'}).
\end{align*}
On the other hand, 
\begin{align*}
e^f \hat \ell^\cC (\bar{b} e^{\bar{a}}) = e^f
(\bar{c}e^{\bar{a}}) = \bar{c}'e^{\bar{a}'}. 
\end{align*}
\end{proof}

\section{The proofs of Theorem~\ref{thm:fukayamain} and
  Corollary~\ref{cor:fukaya1}} 
\label{sec:proofs}

Looking back at Theorem~\ref{thm:blackbox}, we see that it asserts
that the holomorphic disks with boundary on the Lagrangian submanifold
give rise to a Maurer-Cartan element $\alpha$ in the L$_\infty$ structure on
$\widehat{\cC}$ such that with respect to the twisted
differential the element $[L] \in \widehat{\cC}$ becomes
exact. Since $[L]$ is never exact with respect to the ordinary boundary
operator $\del = \lambda_1$, this tells us that both the
Maurer-Cartan element $\alpha$ and at least one of the operations
$\lambda_k$ with $k \geq 2$ must be nontrivial, since otherwise the twisted
differential coincides with the untwisted boundary operator $\p$. This 
observation lies at the core of Fukaya's proof
of Theorem~\ref{thm:fukayamain}. Before I discuss that, I will state
two purely topological facts that will turn out to be useful.

\begin{lemma}\label{lem:top1}
Let $\gamma:S^1 \to L$ be a loop, and denote by $Z \subset \pi_1(L)$
the centralizer of $\gamma$, i.e. the set of all elements commuting
with $\gamma$. Let $\pi:\tilde L \to L$ be a connected covering of $L$
associated to the subgroup $Z$, and let $\tilde\gamma$ be a lift of
$\gamma$. Then the projection  $\pi$ induces a homeomorphism
$\Pi:\Lambda_{\tilde\gamma} \tilde L \to \Lambda_\gamma L$ between
the components of $\tilde \gamma$ and $\gamma$ in the respective free
loop spaces.
\end{lemma}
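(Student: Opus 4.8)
The map $\Pi$ is simply the restriction, to the component of $\tilde\gamma$, of the postcomposition map $\pi_\sharp:\Lambda\tilde L\to\Lambda L$, $\tilde\delta\mapsto\pi\circ\tilde\delta$, so it is automatically continuous. The plan is to prove more, namely that $\Pi$ is a \emph{covering map} of the connected space $\Lambda_\gamma L$ whose fibre over the point $\gamma$ itself is the single point $\tilde\gamma$. Since a covering map over a connected base has fibres of constant cardinality, this makes $\Pi$ a bijective local homeomorphism, hence a homeomorphism, which is the assertion.

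For the covering property I would first establish the general fact that $\pi_\sharp:\Lambda\tilde L\to\Lambda L$ is a covering map onto the union of the path components of $\Lambda L$ that it meets. Local triviality is seen by the usual device: given a loop $\delta_0$ in the image, subdivide $S^1$ into finitely many arcs $A_1,\dots,A_m$ each mapped by $\delta_0$ into an evenly covered open set $V_j\subset L$, chosen small enough that consecutive intersections are connected; then on the neighbourhood $W=\{\delta:\delta(A_j)\subset V_j\text{ for all }j\}$ of $\delta_0$ a lift of $\delta$ amounts to a choice of a sheet of $\pi^{-1}(V_j)$ over each arc that is compatible across consecutive overlaps and around the base point, and the finite set of such choices is independent of $\delta\in W$, so $\pi_\sharp^{-1}(W)\cong W\times(\text{discrete set})$. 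That the image is open and closed, hence a union of path components, follows because the obstruction to extending a lift along a path in $\Lambda L$ — that is, along a homotopy of loops in $L$ — is locally constant. Alternatively one may quote the general principle that $\mathrm{Map}(S^1,-)$ carries Hurewicz fibrations to Hurewicz fibrations and that such a fibration with discrete fibres over a locally nice base is a covering. In either case, since $\pi_\sharp(\tilde\gamma)=\gamma$, the component $\Lambda_{\tilde\gamma}\tilde L$ is one of the components of $\pi_\sharp^{-1}(\Lambda_\gamma L)$ and $\Pi$ is a covering of $\Lambda_\gamma L$.

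Next I would compute the fibre $\Pi^{-1}(\gamma)$. Put $x_0=\gamma(1)$ and take $\tilde x_0=\tilde\gamma(1)$, so that $\pi_*\pi_1(\tilde L,\tilde x_0)=Z$; note that $[\gamma]\in Z$ because $\gamma$ commutes with itself, which is precisely why $\gamma$ lifts to the loop $\tilde\gamma$. Let $\tilde\delta\in\Lambda_{\tilde\gamma}\tilde L$ satisfy $\pi\circ\tilde\delta=\gamma$. A free homotopy in $\tilde L$ from $\tilde\gamma$ to $\tilde\delta$, read off at the base point $1\in S^1$, is a path $\tilde q$ from $\tilde x_0$ to $\tilde\delta(1)$; both endpoints lie over $x_0$, so $q:=\pi\circ\tilde q$ is a loop at $x_0$, and applying $\pi_*$ to the change-of-base-point relation furnished by the homotopy (together with the fact that change of base point along a loop is conjugation) yields $[q]\,[\gamma]\,[q]^{-1}=[\gamma]$ in $\pi_1(L,x_0)$. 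Hence $[q]\in Z=\pi_*\pi_1(\tilde L,\tilde x_0)$, so $q$ lifts to a loop based at $\tilde x_0$; but its unique lift starting at $\tilde x_0$ is $\tilde q$, which therefore closes up: $\tilde\delta(1)=\tilde q(1)=\tilde q(0)=\tilde x_0=\tilde\gamma(1)$. Two lifts of $\gamma:S^1\to L$ that agree at $1$ must coincide, so $\tilde\delta=\tilde\gamma$. Thus $\Pi^{-1}(\gamma)=\{\tilde\gamma\}$, and combined with the previous step $\Pi$ is a bijective covering of the connected space $\Lambda_\gamma L$, hence a homeomorphism.

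The main obstacle is the point-set topology hidden in the second step: one has to be somewhat careful about the (compact-open, or any reasonable) topology on the mapping space $\Lambda L$ in order to know that postcomposition with a covering map is again a covering onto a clopen set — this is where most of the routine ``work'' lives, even though it is entirely standard. By contrast, the genuine mathematical input, namely the use of the hypothesis that $Z$ is the \emph{centralizer} of $\gamma$, is confined to the short argument of the third step: it is exactly the relation $[q]\,[\gamma]\,[q]^{-1}=[\gamma]$ that lets one conclude $[q]\in\pi_*\pi_1(\tilde L,\tilde x_0)$ and thereby pin down the base point of the lift.
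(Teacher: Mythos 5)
Your argument is correct, and it packages the proof differently from the paper while sharing the same group-theoretic core. The paper proves bijectivity directly: surjectivity by lifting a free homotopy $h$ with $h_0=\gamma$ to a free homotopy in $\tilde L$ starting at $\tilde\gamma$, and injectivity by noting that two preimages of a loop differ by a deck transformation $g$, reducing to the claim that $\tilde\gamma$ and $g\circ\tilde\gamma$ cannot be freely homotopic unless $g$ fixes $\tilde\gamma$ --- which is exactly your basepoint-trace computation $[q][\gamma][q]^{-1}=[\gamma]$, hence $[q]\in Z=\pi_*\pi_1(\tilde L)$, hence the trace lifts closed. So your third step is essentially the paper's injectivity argument, specialized to the fibre over the single point $\gamma$ (the paper likewise reduces to preimages of $\gamma$ itself). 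Where you genuinely diverge is in establishing surjectivity and the homeomorphism property: the paper gets surjectivity from homotopy lifting and leaves the topological statement (continuity of $\Pi^{-1}$) implicit, whereas you prove that $\mathrm{Map}(S^1,\pi)$ is a covering onto a clopen union of components and then invoke constancy of fibre cardinality over the connected, locally path-connected base. This buys you the full homeomorphism statement honestly (a bijective covering is a homeomorphism) at the cost of the point-set work you flag, which is standard but not free; the paper's route is more elementary, needing only path/homotopy lifting for $\pi$. Two small remarks: your even-covering argument does need the connectedness of the consecutive intersections $V_j\cap V_{j+1}$ (and the closing condition at the basepoint) exactly as you say, so keep that hypothesis explicit; and your normalization $\pi_*\pi_1(\tilde L,\tilde\gamma(1))=Z$ (rather than a conjugate of $Z$) is genuinely used in the fibre computation --- the paper makes the same implicit choice of basepoint, so this is a matter of stating the convention, not a gap.
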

\begin{proof}
Since $\pi:\tilde L \to L$ is a covering, any free homotopy $h:[0,1]
\times S^1 \to L$ with $h|_{\{0\}   \times S^1}=\gamma$ admits a
(unique) lift $\tilde h$ to $\tilde L$ with $\tilde h|_{\{0\} \times
  S^1}=\tilde\gamma$, and so in particular $h|_{\{1\} \times S^1}$ is
the image of $\tilde h|_{\{1\} \times S^1}$ under the map $\Pi$
induced by the projection. This proves surjectivity of $\Pi$.

To prove injectivity, assume that $\Pi(\tilde \delta_1)= \Pi(\tilde
\delta_2)=\delta$. Note that our two lifts $\tilde \delta_1$ and $\tilde \delta_2$ of $\delta$ are related by a
deck transformation, i.e. by the action of some homeomorphism $g:
\tilde L \to \tilde L$ satisfying $\pi \circ g = \pi$.
If $\tilde h_1$ is a free homotopy from $\tilde \gamma$ to $\tilde
\delta_1$, then $g \circ \tilde h_1$ is a homotopy from $g \circ
\tilde \gamma$ to $\tilde \delta_2$. Since by assumption
$\tilde\delta_2$ is also freely homotopic to $\tilde \gamma$, we
conclude that if $\Pi$ is not injective, then $\gamma$ has at least
two preimages, namely $\tilde \gamma$ and $g \circ \tilde \gamma$.

Now suppose $\tilde \gamma$ and $g \circ \tilde \gamma$ are freely
homotopic for some deck transformation $g$, so that they are both
preimages of $\gamma$ under $\Pi$. A free homotopy $\tilde h$
from $\tilde \gamma$ to $g \circ \tilde \gamma$ can be reinterpreted as a based
homotopy from $\tilde \gamma$ to $\chi * (g \circ \tilde \gamma) *
\chi^{-1}$, where $\chi= \tilde h|_{[0,1] \times \{1\}}$ is the path
travelled by the base point under the homotopy. Note that $\chi$
projects to a closed loop in $L$ representing $\hat g \in
\pi_1(L)$. In particular, the projection of the homotopy $\tilde h$
yields that  
$$
\gamma \cong \hat g \gamma \hat g^{-1} \quad \text{ \rm in } \pi_1(L).
$$
But $Z= \pi_*(\pi_1(\tilde L))$ was chosen to be the centralizer of $\gamma$ in
$\pi_1(L)$, so $\hat g \in \pi_*(\pi_1(\tilde L))$. In other words, 
this implies that $\chi$ was a closed loop and so 
$\tilde \gamma = g \circ \tilde \gamma$, i.e. any two preimages of
$\gamma$ coincide. Together with the previous
observation this shows that $\Pi$ is injective, completing the proof
of the lemma.
\end{proof}
\begin{lemma}\label{lem:top2}
In the situation of the previous lemma, assume moreover that $L$ (and
so $\tilde L$ as well) is aspherical. Then evaluation at the base point
$ev: \Lambda_{\tilde{\gamma}} \tilde L \to \tilde L$ is a homotopy
equivalence. 
\end{lemma}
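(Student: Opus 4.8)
The plan is to exploit the fact that for an aspherical space, the free loop space decomposes (up to homotopy) in a way controlled entirely by $\pi_1$. Concretely, I would first recall the standard fact that if $X$ is aspherical (i.e. a $K(\pi,1)$ with $\pi=\pi_1(X)$), then the component $\Lambda_\gamma X$ of the free loop space corresponding to a conjugacy class $\gamma$ is itself aspherical, and evaluation at the basepoint $\ev:\Lambda_\gamma X\to X$ is a fibration with fiber the based loop space $\Omega X\simeq \pi$ (a discrete set, since $X$ is a $K(\pi,1)$). The associated long exact sequence of homotopy groups then shows that $\Lambda_\gamma X$ has homotopy groups concentrated in degrees $0$ and $1$, so it is itself a $K(\cdot,1)$, and the map $\ev$ realizes $\pi_1(\Lambda_\gamma X)$ as (a conjugate of) the centralizer $Z$ of a representative of $\gamma$ inside $\pi=\pi_1(X)$. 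This is the homotopy-theoretic incarnation of the elementary fact that a free homotopy class of loops based at a point, rel nothing, corresponds to a conjugacy class, while the endpoints of based self-homotopies of $\gamma$ trace out exactly the centralizer.

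Applying this to $X=\tilde L$: since $\pi_1(\tilde L)=Z$ is \emph{already} the centralizer of (a lift of) $\gamma$, the above shows $\ev:\Lambda_{\tilde\gamma}\tilde L\to\tilde L$ induces an isomorphism on $\pi_1$. More precisely, I would argue as follows. By Lemma~\ref{lem:top1} we may identify $\Lambda_{\tilde\gamma}\tilde L$ with $\Lambda_\gamma L$ via $\Pi$, but it is cleaner to work directly on $\tilde L$. The evaluation fibration $\Omega\tilde L\to\Lambda_{\tilde\gamma}\tilde L\xrightarrow{\ev}\tilde L$ has fiber $\Omega\tilde L$, which is homotopy equivalent to the discrete group $\pi_1(\tilde L)=Z$ because $\tilde L$ is aspherical. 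Hence $\pi_k(\Lambda_{\tilde\gamma}\tilde L)\cong\pi_k(\tilde L)=0$ for $k\geq 2$, so $\Lambda_{\tilde\gamma}\tilde L$ is aspherical, and from the exact sequence
\begin{equation*}
1\longrightarrow \pi_1(\Lambda_{\tilde\gamma}\tilde L)\xrightarrow{\ \ev_*\ }\pi_1(\tilde L)\xrightarrow{\ \partial\ }\pi_0(\Omega\tilde L)
\end{equation*}
the image of $\ev_*$ is the stabilizer of the component $[\tilde\gamma]\in\pi_0(\Omega\tilde L)$ under the monodromy action of $\pi_1(\tilde L)$ on $\pi_0(\Omega\tilde L)$. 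That action is by conjugation on conjugacy classes once one pins down basepoints, so the stabilizer of $[\tilde\gamma]$ is precisely the centralizer of $\tilde\gamma$ in $\pi_1(\tilde L)$ — and since $\pi_1(\tilde L)=Z$ is abelian over $\tilde\gamma$ in the sense that every element of $Z$ commutes with $\tilde\gamma$, this centralizer is all of $Z=\pi_1(\tilde L)$. Therefore $\ev_*$ is surjective; it is injective because $\pi_2(\tilde L)=0$ makes the preceding term in the sequence vanish. So $\ev_*$ is an isomorphism on $\pi_1$, and since both spaces are aspherical, $\ev$ is a homotopy equivalence by Whitehead's theorem.

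The one point requiring care — and the step I expect to be the main obstacle to state cleanly rather than to prove — is the identification of the monodromy action of $\pi_1(\tilde L)$ on $\pi_0(\Omega\tilde L)$ with the conjugation action, together with the bookkeeping needed to conclude that the stabilizer of the class of $\tilde\gamma$ equals the centralizer. This is the same subtlety already handled in the proof of Lemma~\ref{lem:top1}: a free homotopy of $\tilde\gamma$ to itself drags the basepoint along a loop $\chi$, and the class of $\chi$ must centralize $\tilde\gamma$; conversely any centralizing element gives such a free self-homotopy. Since $\pi_1(\tilde L)$ was chosen to be exactly this centralizer, the action on the component $[\tilde\gamma]$ is trivial on the relevant orbit and the stabilizer is the whole group. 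Once this is pinned down, the rest is formal: fibration exact sequence plus Whitehead. An alternative, more economical route avoiding explicit monodromy discussion is to transport along $\Pi$ and instead analyze $\ev:\Lambda_\gamma L\to L$ directly, noting $\ev$ factors through $\tilde L$ up to homotopy via the covering, but the fibration argument above is the most transparent.
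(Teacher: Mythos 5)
Your proposal is correct and follows essentially the same route as the paper: both rest on the evaluation fibration, the observation that a free self-homotopy of $\tilde\gamma$ drags the basepoint along a loop $\chi$ which must centralize $\tilde\gamma$ (so that, since $\pi_1(\tilde L)=Z$ centralizes $\tilde\gamma$, free and based homotopy coincide in this component), and asphericity of $\tilde L$. The only difference is packaging: the paper uses centrality to identify the fiber with the $\tilde\gamma$-component of the based loop space, which is contractible, and concludes directly, whereas you extract the same information from the homotopy long exact sequence (monodromy stabilizer $=$ centralizer) and then invoke Whitehead's theorem.
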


\begin{proof}
The fiber of the map $ev: \Lambda_{\tilde{\gamma}} \tilde L \to \tilde
L$ at $\tilde\gamma(0)$ is the space $\Omega_{\tilde{\gamma}}\tilde L$
of loops which are based at $\tilde\gamma(0)$ and {\em freely}
homotopic to $\tilde\gamma$. As in the previous proof, we observe 
that any free homotopy between $\delta \in
\Omega_{\tilde{\gamma}}\tilde L$ and $\tilde\gamma$ can be
reinterpreted as a based homotopy between $\delta$ and $\chi *
\tilde\gamma * \chi^{-1}$. But $\tilde\gamma$ is central in
$\pi_1(\tilde L)$, so that $\chi * \tilde \gamma * \chi^{-1}$ is {\em 
  based} homotopic to $\tilde \gamma$. So we conclude that in fact
$\Omega_{\tilde{\gamma}} \tilde L$ is the component of $\tilde\gamma$
in the based loop space of $\tilde L$, which is contractible since
$\tilde L$ is aspherical. So $ev$ is a fibration with contractible
fibers, and hence a homotopy equivalence.
\end{proof}
\begin{corollary}\label{cor:top3}
If $L$ is an aspherical manifold, then every component of the free loop space
$\Lambda L$ has the homotopy type of a CW complex of dimension at most
$\dim L$. \hfill $\qed$
\end{corollary}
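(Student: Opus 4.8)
The plan is to use Lemmas~\ref{lem:top1} and~\ref{lem:top2} to replace an arbitrary component of $\Lambda L$, up to homotopy equivalence, by a covering space of $L$, and then to appeal to the standard fact that an $n$-manifold has the homotopy type of a CW complex of dimension at most $n$.

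Concretely, I would argue as follows. We may assume $L$ is connected, so that the components of $\Lambda L$ are indexed by conjugacy classes in $\pi_1(L)$; thus an arbitrary component has the form $\Lambda_\gamma L$ for some loop $\gamma:S^1 \to L$, and I fix such a representative. Let $Z \subset \pi_1(L)$ be the centralizer of $[\gamma]$, let $\pi:\tilde L \to L$ be the connected covering associated to $Z$, and let $\tilde\gamma$ be a lift of $\gamma$, as in Lemma~\ref{lem:top1}. That lemma gives a homeomorphism $\Pi:\Lambda_{\tilde\gamma}\tilde L \to \Lambda_\gamma L$. Since $L$ is aspherical, so is the covering space $\tilde L$, and hence Lemma~\ref{lem:top2} applies: evaluation at the base point is a homotopy equivalence $\Lambda_{\tilde\gamma}\tilde L \simeq \tilde L$. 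Composing, $\Lambda_\gamma L$ is homotopy equivalent to $\tilde L$, which is a connected manifold with $\dim \tilde L = \dim L$. It then remains only to invoke that a connected manifold of dimension $n$ has the homotopy type of a CW complex of dimension at most $n$ (in the non-compact case even of dimension at most $n-1$), so $\Lambda_\gamma L$ inherits such a homotopy type as well.

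The one step that is not a purely formal concatenation of the two lemmas is the last one, and that is where I expect whatever care is needed: one must invoke CW-approximation for manifolds in a form valid also for the covers $\tilde L$, which are typically non-compact (infinite-sheeted) when $Z$ has infinite index in $\pi_1(L)$. This is classical --- for smooth $L$ one may use a proper Morse function, whose critical points have index at most $n$, or appeal to triangulability --- and should simply be cited rather than reproved. The auxiliary choices, namely the representative $\gamma$ and its lift $\tilde\gamma$, do not affect the homotopy type of anything in sight, so no further argument is needed to make the conclusion independent of them.
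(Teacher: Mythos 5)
Your argument is correct and is precisely the intended one: the paper states Corollary~\ref{cor:top3} with no written proof because it follows by exactly this concatenation of Lemma~\ref{lem:top1} and Lemma~\ref{lem:top2}, identifying each component $\Lambda_\gamma L$ with the (aspherical, possibly non-compact) cover $\tilde L$ associated to the centralizer of $\gamma$, and then using that an $n$-manifold has the homotopy type of a CW complex of dimension at most $n$. Your added care about the non-compact covers is a reasonable point to flag, but it is the same standard fact the paper implicitly invokes, so there is no difference in approach.
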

After these preliminaries, I come to the proof of the main theorem.
\begin{proof}(of Theorem~\ref{thm:fukayamain})
Recall that the Maurer-Cartan element $\alpha \in \widehat{\cC}$ is
built from the moduli spaces $\{\cMbar(a)\}_{a \in \pi_2(\C^n,L)}$,
which have geometric dimensions
$$
\dim \cMbar(a) = n-2 +\mu(a),
$$
and the element $\beta\in \widehat{\cC}$ is built from the spaces
$\{\overline{\cN}(a)\}_{a\in \pi_2(\C^n,L)}$ with geometric dimensions
$$
\dim \overline{\cN}(a) = n+1 + \mu(a).
$$
Denote by $\widehat{\cH}$ the homology with respect to the usual
boundary operator of $\widehat{\cC}$. Recall that we denote by $\bar
c$ the image of $c\in \widehat{\cC}$ under the degree shift
$\widehat{\cC} \to \widehat{\cC}[-1]$. According to
Theorem~\ref{thm:HPT}, the L$_\infty$ structure on $\widehat{\cC}$
pushes forward to an L$_\infty$ structure on $\widehat{\cH}$ under a
homomorphism $e^f$, and by Proposition~\ref{prop:pushforward}, this
homomorphism maps the elements $\alpha$, $\beta$ and $[L]$ in
$\widehat{\cC}$ to elements $\alpha'$, $\beta'$ and $[L]$ in
$\widehat{\cH}$ satisfying the equation
$$
\sum_{k=2}^\infty \frac 1 {(k-1)!}
\ell^\cH_k(\bar\beta',\bar\alpha',\cdots,\bar\alpha') = \overline{[L]}. 
$$
Writing
$$
\alpha' = \sum_{a\in \pi_2(\C^n,L)} \alpha'(a), \quad  \beta' =
\sum_{a\in \pi_2(\C^n,L)} \beta'(a), \
$$
the part of this equation corresponding to the trivial relative
homotopy class can be written more explicitly as
\begin{equation}\label{eq:degrees}
\sum_{k=2}^\infty \frac 1 {(k-1)!}
\sum_{a= a_1 + \dots + a_{k-1}} \ell^\cH_k(\bar\beta'(-a),\bar\alpha'(a_1),\cdots,\bar\alpha'(a_{k-1})) = \overline{[L]}. 
\end{equation}
Since the homomorphism between the L$_\infty$ structures preserves
degrees, the geometric degrees of $\alpha'(a_i)$ and $\beta'(-a)$ are
$n-2+\mu(a_i)$ and $n+1+\mu(-a)=n+1-\mu(a)$, respectively.

By the assumption that $L$ is aspherical, Corollary~\ref{cor:top3}
implies that the homology $\widehat{\cH}$ is concentrated in geometric
degrees $0 \leq d \leq n$. Combining this observation with
\eqref{eq:degrees} and the fact that the Maslov index is even for
orientable Lagrangian submanifolds $L$, we find that for 
the term
$\ell^\cH_k(\bar\beta'(-a),\bar\alpha'(a_1),\cdots,\bar\alpha'(a_{k-1}))$
to be nonzero we must have
$$
2 \leq \mu(a) \leq n+1, \quad \text{\rm and} \quad 2-n \leq \mu(a_i)
\leq 2.
$$
The first equation immediately implies that $\mu$ is not identically
zero. Moreover, if $\mu(a_i) \leq 0$ for all $i= 1 \dots,k-1$, it
follows that $\mu(a) \leq 0$, again contradicting the first
equation. Thus we conclude that some $\alpha(a_i)$ with
$\mu(a_i)=2$ must be nonzero, implying that the corresponding moduli
space is nonempty. So $a_i$ is represented by a holomorphic
disk, and hence must have positive symplectic energy.

Set $\gamma := \del (a_i) \in \pi_1(L)$ and let $C \subset \pi_1(L)$
denote the centralizer of $\gamma$. Notice that we have a short
exact sequence
$$
0 \to \Ker(\mu|_Z) \to C \stackrel{\frac 1 2 \mu}{\longrightarrow}
\Z \to 0,
$$
in which the last map admits an inverse sending $1$ to $\gamma$. It
follows that the map $\rho:\Z \times \Ker(\mu|_C) \to C$, defined by
$\rho(k,g)=\gamma^k \cdot g$, is an isomorphism ($\rho$ is indeed a group
homomorphism because $\gamma$ commutes with all elements of $C$). Since $L$ is
a $K(\pi,1)$, the covering space $\tilde L$ of $L$ with $\pi_1(\tilde
L) = C$ is a $K(\Z \times \Ker(\mu|_C),1)$, so it is homotopy
equivalent to $S^1 \times L'$ for a $K(\Ker(\mu|_C),1)$ space $L'$.

To complete the proof of the theorem, it remains to show that $L'$ is
closed or, equivalently, that $\tilde L \to L$ is a finite covering space.

 Note that the class $a_i$ with $\del a_i = \gamma$ had the
property that $\mu(a_i)=2$, and moreover $\alpha'(a_i)$ is a nonzero
element of geometric degree $n$ in $\widehat{\cH}$. So the homology in
degree $n$ of $\Lambda_\gamma(L)$ must be nonzero. But combining
Lemma~\ref{lem:top1} and Lemma~\ref{lem:top2}, we see that
$\Lambda_\gamma L$ is homotopy equivalent to the $n$-manifold $\tilde
L$. The nonvanishing of its top-dimensional homology now implies that
$\tilde L$ is closed, which in turns means that $\tilde L \to L$ is a
finite covering space.  
\end{proof}

The more precise statement in dimension 3 can be proven with some
specific results from 3-dimensional topology. I wish to thank K.~Fukaya, K.~Honda and S.~Maillot for helpful correspondence, which lead to the following proof of Corollary~\ref{cor:fukaya1}.

\begin{proof}(of Corollary~\ref{cor:fukaya1})
Let $L$ be a compact, orientable, prime 3-manifold. It is well-known
(see e.g. \cite[chapter 3]{He}) that either $L\cong S^1 \times S^2$ or
$L$ is irreducible, meaning that every embedded two-sphere in $L$
bounds a ball in $L$. 

If an irreducible 3-manifold $L$ admits a Lagrangian embedding into 
$\C^3$, then by Gromov's Theorem~\ref{thm:gromov-notexact} it has
infinite first homology, and hence infinite fundamental group, and so its
universal cover $\tilde L$ is non-compact. Moreover, by the sphere
theorem (see \cite[chapter 4]{He}), an irreducible 3-manifold has
trivial second homotopy group. It follows that $H_k(\tilde L)=0$ for
$k \geq 1$, and so by Hurewicz's theorem $\pi_k(\tilde L)=0$ for $k
\geq 1$, implying that $L$ itself is aspherical. Now by
Theorem~\ref{thm:fukayamain}, a finite cover of $L$ is homotopy
equivalent to $S^1 \times \Sigma$ for some closed oriented surface
$\Sigma$, and a result of Waldhausen~\cite[Corollary 6.5]{Wa:64}
implies that this homotopy equivalence can be improved to a
homeomorphism. 

Recall from the proof above that the fundamental group
$C$ of the cover arises as the centralizer of an element $\gamma \in
\pi_1(L)$ with $\mu(\gamma)=2$. 
Now I will argue that in fact $\gamma$ is central in $\pi_1(L)$, so
that the covering projection is actually a homeomorphism. Indeed, consider 
the exact sequence 
$$
0 \to K \to \pi_1(L) \stackrel{\frac 1 2 \mu}{\longrightarrow} \Z \to
0,
$$
where $K= \ker \mu$. Since the centralizer $C$ of $\gamma$ is of
finite index, $K'=C \cap K$ is of finite index in $K$. From the above
proof of the theorem, we see that $K'$ is finitely generated (it is
the fundamental group of $\Sigma$), so $K$ is also
finitely generated. Then by Stallings' fibration theorem (\cite{St:62},
see also \cite[Theorem~11.6]{He}), we deduce that $K$ is the fundamental
group of a compact surface $S$, which under our current assumptions
must be closed of genus at least 1. 

Now the proof concludes with the following observation:

\begin{lemma}
Any automorphism $\varphi$ of the fundamental group $K$ of a closed oriented
surface which is trivial on some finite index subgroup $K'$ is trivial.
\end{lemma}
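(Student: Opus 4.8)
The plan is to treat the torus case and the higher-genus case separately, since the group-theoretic mechanism is genuinely different for the two.

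\emph{Genus $1$.} Here $K\cong\Z^2$ is abelian and torsion-free. If $K'\le K$ has index $m$, then every element of the finite abelian group $K/K'$ has order dividing $m$, so $mK\subseteq K'$, and hence $\varphi$ fixes $mx$ for every $x\in K$. Additivity of $\varphi$ gives $m(\varphi(x)-x)=0$, and torsion-freeness forces $\varphi(x)=x$; so $\varphi$ is trivial.

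\emph{Genus $g\ge 2$.} First I would replace $K'$ by its normal core $N:=\bigcap_{k\in K}kK'k^{-1}$. This is still of finite index (a finite-index subgroup has only finitely many conjugates, each of finite index, and a finite intersection of finite-index subgroups has finite index), it is normal in $K$, and it is contained in $K'$, so $\varphi$ restricts to the identity on $N$. Being a finite-index subgroup of a closed orientable surface group, $N$ is the fundamental group of a finite cover of $\Sigma$, hence itself a closed orientable surface group, with Euler characteristic $[K:N]\,\chi(\Sigma)<0$; so the covering surface has genus $\ge 2$, and in particular $N$ is nonabelian with trivial center. The key computation is then: for any $k\in K$ and $n\in N$ we have $knk^{-1}\in N$, so $\varphi(knk^{-1})=knk^{-1}$, and expanding the left-hand side using that $\varphi$ fixes $n$ gives $\varphi(k)\,n\,\varphi(k)^{-1}=knk^{-1}$, i.e. $k^{-1}\varphi(k)\in C_K(N)$. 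So it suffices to show $C_K(N)=1$. Now $C_K(N)$ is normal in $K$ (it is the centralizer of a normal subgroup), and $C_K(N)\cap N = Z(N)=1$; therefore $C_K(N)$ embeds into the finite group $K/N$ and is finite, so by torsion-freeness of surface groups $C_K(N)=1$. Hence $\varphi(k)=k$ for all $k\in K$.

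The only step that uses genuine information about surfaces, rather than formal group theory, is the vanishing of the center of a closed surface group of genus $\ge 2$; this is standard (for instance $N$ is a nonelementary discrete subgroup of $PSL(2,\R)$, equivalently a torsion-free word-hyperbolic group with no cyclic finite-index subgroup), and I would simply quote it. I do not expect a serious obstacle beyond this; the point to be careful about is the genus split, since the higher-genus argument genuinely fails for $\Z^2$, where $C_K(N)=K\neq 1$ and one must instead exploit torsion-freeness together with divisibility of indices as above.
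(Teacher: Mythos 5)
Your proof is correct, but it follows a different route from the paper's. The paper argues one element at a time: for $g\in K$ nontrivial, the infinite cyclic subgroup $Z=\langle g\rangle$ meets $K'$ in a nontrivial (finite-index) subgroup that $\varphi$ fixes pointwise, so $\varphi(g)^m=g^m$ for some $m\geq 1$, and one concludes $\varphi(g)=g$ from the fact that this forces the induced map on the relevant cyclic subgroup to be the identity --- in effect using torsion-freeness together with uniqueness of roots in surface groups (this last point, i.e. why $\varphi(g)$ must again lie in a common cyclic subgroup with $g$, is the one place where the paper's sketch is thin). This treats genus $1$ and genus $\geq 2$ uniformly and never needs normality or centers. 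You instead split off the torus case (where your divisibility argument is exactly the natural one) and, for genus $\geq 2$, pass to the normal core $N$ of $K'$, observe $\varphi(k)n\varphi(k)^{-1}=knk^{-1}$ for all $n\in N$, and kill $C_K(N)$ using $Z(N)=1$ plus torsion-freeness of $K$. What your approach buys is a fully rigorous argument resting only on the standard facts that closed surface groups are torsion-free and that genus $\geq 2$ surface groups have trivial center; moreover it generalizes verbatim to any torsion-free group all of whose finite-index subgroups have trivial center (e.g. torsion-free non-elementary hyperbolic groups). What the paper's approach buys is brevity and uniformity across genera, at the cost of quietly invoking the structure of centralizers (unique roots) in surface groups. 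Both the genus split and the normal-core step in your write-up are handled correctly, so there is no gap.
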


\begin{proof}
If the surface is a sphere there is nothing to prove, 
so we consider the case that the genus of the surface is at least 1.
One knows that the fundamental group $K$ of a closed surface has no torsion.
Let $g\in K$ be given and consider the infinite cyclic subgroup $Z =
\langle g \rangle$ generated by $g$. The subgroup $\varphi(Z) \cap Z$
contains the finite index subgroup $K' \cap Z$, on which $\varphi$
acts trivially. 
But any automorphism of an infinite cyclic group fixing some nontrivial 
subgroup must be the identity, so $\varphi(g)=g$. Since this applies to any $g\in K$, the lemma is proven.
\end{proof}
Applying the lemma to the action of $\gamma$ on $K$ by conjugation,
which clearly fixes all elements of  $K' = K \cap C$, we
finally conclude that $\gamma$ is central in $\pi_1(L)$, so that $C=\pi_1(L)$, 
which finishes the proof of the theorem.
\end{proof}

\section{Reflections}

Above, I have presented Fukaya's elegant arguments
leading to some substantial new results about Lagrangian submanifolds
in $\C^n$. At first glance, it seems that string topology is really
essential to the approach. However, on further inspection, one
discovers that there may be a way to avoid it almost entirely.

The basic idea is the following. By Viterbo's theorem (see
\cite{Vi:98} and chapter chap:Viterbo), the homology of the free loop space  can be described in symplectic terms as the symplectic homology of the cotangent bundle. This theory can be defined in more general situations, for example for exact symplectic manifolds with contact-type
boundary. Moreover, for \textit{exact} codimension 0 embeddings $U
\hookrightarrow W$ one has restriction maps $SH_*(W) \to SH_*(U)$. 
It seems reasonable to expect (and is the subject of current work)
that every algebraic structure that exists on the homology of the free
loop space can also be defined on symplectic homology in general, even
if the underlying domain is not a cotangent bundle. In the exact case,
the restriction homomorphism should respect all these structures.

But even more should be true. In the case of a non-exact embedding $U
\hookrightarrow W$, there will be a Maurer-Cartan element in
$SH_*(U)$ such that after twisting all the structures by this
Maurer-Cartan element we get a morphism from $SH_*(W)$ to the twisted
version $SH_*^{\operatorname{twisted}}(U)$. This expectation is
consistent with (and gives one of several possible conceptual
explanations for) the results of Fukaya for Lagrangians in $\C^n$.

Indeed, with $U$ being a small neighborhood of the zero section in the
cotangent bundle of $L$ and $W=\C^n$, we are exactly in the situation
just described. What I have argued in earlier sections is that, after
twisting by a Maurer-Cartan element coming from the embedding, the
unit $[L] \in H_*(\Lambda L)$ with respect to the loop product has
become exact, which by a standard argument will force the twisted
homology to vanish completely. This is good news, because only in this
case can we even expect to have a morphism from $SH_*(\C^n)=0$ to this
ring. The prediction is that this morphism can indeed be defined in a
suitable chain version of the theory.

Once the above argument has been made to work, it extends the
applicability of Fukaya's approach in several directions. Notice that
string topology only enters indirectly, via Viterbo's isomorphism. 
As long as the algebraic operations can be defined and the morphism
associated to a codimension zero embedding respects them, one does not
even need to know that the operations on symplectic homology are the same as
those in string topology (although this is of course expected to be true).
Moreover, one can study non-exact codimension 0 embeddings
of general exact symplectic manifolds with contact boundary by this method.

\section{Guide to the literature}

The basic source for this chapter are of course Fukaya's
papers \cite{Fu:06,Fu:07}. Versions of Theorem~\ref{thm:fukayamain} under additional assumptions, like monotonicity of the Lagrangian submanifold, are much easier to achieve, see e.g. \cite{Bu:10, Da:12, EK:11} and the references therein.

For an introduction to symplectic topology the book \cite{MS:98} is recommended. It covers a lot more than is necessary to understand the problem  discussed here, and it gives some hints why Lagrangian submanifolds are so central in symplectic topology.
To learn something more specific about Lagrangian embeddings and
immersions, the excellent survey \cite{ALP:94} is still the best place
to start. Recently, new results have appeared which suggest that the problem in higher dimensions is more flexible than previously expected~\cite{EEMS:13}.

A chain complex $\mathcal C$ for the free loop space on which the loop bracket is fully defined, and which therefore might serve in the implementation of Theorem~\ref{thm:blackbox}, has recently been proposed by Irie~\cite{Ir:14}.

Finally, the reader who really wants to appreciate the discussion in
this chapter needs to know quite a bit about holomorphic curves. One
good source which thoroughly covers a lot of the basics, including a
version of Gromov compactness and a complete proof of Gromov's Theorem
\ref{thm:gromov-notexact}, is \cite{MS:04}. Many aspects of the theory are also covered in the earlier book~\cite{AL:94}. With these as
a guide, the monumental \cite{FOOO} will hopefully look less daunting. 

\frenchspacing

\end{document}